\renewcommand{\leq}{\leqslant}
\renewcommand{\geq}{\geqslant}
\newcommand{\xs}{x_*}
\newtheorem{example}{Example} 
\newtheorem{theorem}{Theorem}
\newtheorem{lemma}{Lemma}
\newtheorem{remark}{Remark}
\newtheorem{definition}{Definition}
\newtheorem{proposition}{Proposition}
\title{SignSVRG: fixing SignSGD via variance reduction}
\author{
      Evgenii Chzhen \\
      Université Paris-Saclay, CNRS\\
      Laboratoire de mathématiques d’Orsay\\
      \texttt{first.last@cnrs.fr}
      \And
      Sholom Schechtman \\
    SAMOVAR, T\'el\'ecom Sudparis\\
       Institut Polytechnique de Paris\\
      \texttt{first.last@telecom-sudparis.eu}
      }
\begin{document}

\maketitle

\begin{abstract}
We consider the problem of unconstrained minimization of finite sums of functions.
We propose a simple, yet, practical way to incorporate variance reduction techniques into SignSGD, guaranteeing convergence that is similar to the full sign gradient descent.
The core idea is first instantiated on the problem of minimizing sums of convex and Lipschitz functions and is then extended to the smooth case via variance reduction.
Our analysis is elementary and much simpler than the typical proof for variance reduction methods.
We show that for smooth functions our method gives $\mathcal{O}(1 / \sqrt{T})$ rate for expected norm of the gradient and $\mathcal{O}(1/T)$ rate in the case of smooth convex functions, recovering convergence results of deterministic methods, while preserving computational advantages of SignSGD.
\end{abstract}

\section{Introduction}

The purpose of this work is to introduce variance reduction (VR) techniques into the analysis of sign stochastic gradient descent (\SignSGD).
Setting up the stage, we are interested in the problem of minimizing a differentiable function $f : \bbR^d \rightarrow \bbR$, which can be expressed as a finite sum of, non necessarily convex, functions:
\begin{align}\label{eq:opt_problem}
    \min_{x \in \bbR^d} \left\{f(x) \triangleq \frac{1}{n}\sum_{i = 1}^n f_i(x) \right\}\enspace.
\end{align}
In the machine learning community, this problem is referred to as empirical risk minimization, with $f_i$ corresponding to the loss function associated with a $i$-th data point.

Perhaps the most natural way to approach this problem is the celebrated stochastic gradient descent (SGD), one iteration of which is written as $x_{t+1} = x_t - \gamma g_t$, with $g_t \triangleq \nabla f_{i_t}(x_t)$ and $i_t$ is an index distributed uniformly in $\{1, \ldots, n\}$. This procedure, which goes back to \cite{RobbinsMonro51}, is quite intuitive since $\Exp[g_t]= \nabla f(x_t)$, and, on average, one iteration of SGD is pointing in the direction of the negative gradient, which tends to minimize $f$. Naturally, if one thinks that only the direction (and not the magnitude) of descent is important, then one could replace $g_t$ with its sign, leading to~\ref{eq:signSGD_intro}:
\begin{align}
\tag{SignSGD}
\label{eq:signSGD_intro}
    x_1 \in \bbR^d,\qquad x_{t+1} = x_t - \gamma \sign(g_t)\enspace .
\end{align}
To the best of our knowledge,~\ref{eq:signSGD_intro} appeared for the first time in~\cite{fabian1960stochastic} (see also~\cite[Section 6.14]{wilde1964optimum}) as a modification, in the stochastic approximation context, of the celebrated Robbins-Monro algorithm~\cite{RobbinsMonro51}. It later appeared 
(as a special case) in a sequence of works on optimal pseudo-gradient methods in~\cite{polyak1980optimal,
polyak1980robust}. In these works, (a version of)~\ref{eq:signSGD_intro} was shown to be an optimal robust iterative procedure that copes with nearly arbitrary symmetric noises. Since then, there were many studies of sign based algorithm (see e.g. \citep{riedmiller1993direct, tieleman2012lecture, KingmaBa15}).

The resurrected interest to the sign approaches is largely due to \cite{bernstein2018signsgd}, who displayed it under the new spotlight of communication efficiency that preserves many interesting properties of SGD.
A sequence of works have then emerged \cite{bernstein_sign_fault, balles2020geometry, chen2020distributed,safaryan2021stochastic,xiang2023distributed}, to name a few, all mainly working in the case of gradient plus noise oracle, similar to~\cite{polyak1980optimal,
polyak1980robust}. Furthermore, as noted in \cite{balles2018dissecting} \SignSGD might be seen as the ``limit case''of the celebrated ADAM algorithm \citep{KingmaBa15}. Thus, understanding the properties of sign based methods might shed light on the performances of ADAM and closely related algorithms.

Yet, as noted by \cite{karimireddy2019error} (and even earlier in~\cite{aved1967modification}), vanilla~\ref{eq:signSGD_intro} does not generally converge on finite sums. Intuitively, the main issue comes from the fact that, in the finite sum case, the resulting gradient noise is neither symmetric nor has zero median.
As a consequence, \cite{karimireddy2019error} proposes EF-SignSGD, which incorporates error feedback in the loop, fixing the convergence issues for convex and non-convex problems. Since then there were many different attempts to fix the convergence of~\ref{eq:signSGD_intro} \citep{chen2020distributed, safaryan2021stochastic}. Among these, the closest approach to ours is the work \cite{chen2020distributed} in which it was shown that corrupting~\ref{eq:signSGD_intro} with artificial noise might reduce its initial bias.

In a virtually unrelated series of works many authors have studied different ways to accelerate convergence of vanilla SGD. Of a particular interest to us are the variance reduction (VR) techniques which have shown that occasionally recomputing the full gradient of $f$ reduces the variance of one iteration of SGD \cite{schmidt2017minimizing, johnson2013accelerating, defazio2014saga, reddi2016stochastic}. In such a way, VR algorithms recover the convergence rates of the deterministic gradient descent while, simultaneously, preserving the computational advantages of SGD.\\
\textbf{Contributions.} In this work, we propose two simple modifications of~\ref{eq:signSGD_intro} that fixes its convergence issue. Our first method, \ref{eq:algo_core}, is based on the insight that corrupting~\ref{eq:signSGD_intro} by a uniform noise with a carefully selected amplitude fixes the convergence issues. Formally, assume for the moment, that for every $i \in \{1, \ldots, n\}$ and $x \in \bbR^d$, $\norm{\nabla f_i(x)}_{\infty} \leq G_{\infty}$ (this is for instance the case if all of the functions are $G_{\infty}$-Lipschitz w.r.t. $\ell_1$-norm). It turns out that $\Exp[\sign(g_t + G_{\infty} U)] = \nabla f(x_t) / G_{\infty}$, where $U$ is uniformly sampled in $[-1, 1]^d$. Thus, replacing $g_t$ in~\ref{eq:signSGD_intro} by $g_t + G_{\infty} U$, in average, we simply obtain an iteration of SGD, with a step-size $\gamma/ G_{\infty}$. This observation, which might have been overlooked in \cite{chen2020distributed}, immediately gives us optimal convergence rates for convex and Lipschitz continuous functions $f$.\\
In many scenarios, assuming such a uniform bound on stochastic gradients might be unrealistic. Nonetheless, it turns out that in the VR context, we actually have a control on an upper bound of stochastic iterates. Indeed, consider one iteration of the SVRG method (\cite{johnson2013accelerating, reddi2016stochastic})
$x_{t+1} = x_{t} - \gamma v_t$, with $v_t = \nabla f_{i_t}(x_t) - \nabla f_{i_t}(\tilde x) + \nabla f(\tilde x)$ and where $\tilde x$ is one of the previous iterates and is called reference point. If, $\nabla f_i$ is $L$-Lipschitz continuous, then $\norm{v_t} \leq L \norm{x_t - \tilde{x}} + \norm{\nabla f(\tilde{x})}$. Thus, as soon as $\norm{x_t - \tilde{x}}$ is not too large (say smaller than $D >0$), we have a bound on the norm of $v_t$, which involves the value of $\nabla f$ at the reference point $\tilde{x}$.

This observation, motivates the introduction of our main method \emph{sign stochastic variance reduction gradient} \eqref{eq:SignSVRG} (see Algorithm~\ref{algo:SignSVRG} for its two variants), one iteration of which is given by:
\begin{equation}
    \label{eq:SignSVRG}
    \tag{SignSVRG}
    x_{t+1} = x_t - \gamma \sign(\nabla f_{i_t}(x_t) - \nabla f_{i_t}(\tilde x) + \nabla f(\tilde x) + G_t U_t)\enspace ,
\end{equation}
where $U_t$ is uniformly sampled in $[-1, 1]^d$, and $G_t \triangleq L D + \norm{\nabla f(\tilde x) }$ (or coordinate-wise noise for \Vartwo in Algorithm~\ref{algo:SignSVRG}). In such a way, on average \ref{eq:SignSVRG} goes to the direction of the negative gradient of $f$. We establish that, under an appropriate choice of $D$, \ref{eq:SignSVRG} recovers the convergence rates of the deterministic sign gradient descent for (convex and non-convex) smooth functions $f$. Furthermore, similarly to the initial SVRG method we show that the update of {reference point} $\tilde{x}$ happens rarely enough to preserve the computational advantages of stochastic methods.\\
Finally, we believe that our proof techniques are fairly general, opening a way of incorporating VR and sign techniques into other methods than SGD.\\
\textbf{Notations.} Euclidean norm in $\bbR^d$ is denoted by $\|\cdot\|$, while $\ell_p$-norm by $\|\cdot\|_p$. For every linear operator $A : (\bbR^d, \|\cdot\|_p) \to (\bbR^d, \|\cdot\|_q)$ with $p, q \geq 1$ being H\"older conjugates, we denote by $\|A\|_{p \mapsto q}$ its operator norm. Whenever $p = q = 2$, we write $\|A\|$ to denote the spectral norm of $A$. For $u \in \bbR^d$, we write $u^i$, to denote $i$-th coordinate of $u$. For $u, v \in \bbR^d$, we denote $u \otimes v \in \bbR^d$ the vector which $i$-th coordinate is equal to $u^i v^i$. For an abstract set $A$ we denote by $\unif(A)$, the uniform distribution on $A$, whenever the sigma algebra is clear. We denote by $\sign(a)$, the coordinate-wise sign operator. We denote by $x_*$ a global minimizer of $f$ over $\bbR^d$.

\section{The core trick: convergence rates for finite sums}\label{sec:trick}
The goal of this section is to showcase, on a basic example, the simple trick that fixes the non-convergence issue of vanilla~\ref{eq:signSGD_intro} for finite sums. In particular, in this section, we do not intend to push the limits of this trick and obtain neither optimal dependencies nor adaptive versions of our algorithm.
Through this section we work under the following assumption.
\begin{assumption}
    \label{ass:convex+lip}
    Each $f_i : \bbR^d \to \bbR$ is convex and $G_{\infty}$-Lipschitz w.r.t. $\ell_1$-norm.
\end{assumption}
Assuming that each function is $G_{2}$-Lipschitz w.r.t., the Euclidean norm, stochastic gradient descent with step size $\|x_1 - x_*\|/{G_{2}\sqrt{T}}$ yields $O(1 / \sqrt{T})$ as a convergence rate. Let us show how similar rate can be achieved using a mild modification of~\ref{eq:signSGD_intro} under a slightly different smoothness assumption.
Consider the following algorithm:
\begin{align}
    \tag{SignSGD+}
    \label{eq:algo_core}
    x_{t+1} = x_t - \gamma \sign(g_t + G_{\infty} U_t)\enspace,
\end{align}
where $g_t$ is a subgradient of $f_{i_t}$ at $x_t$ with $i_t \sim \unif(\{1, \ldots, n\})$, $U_t \sim \unif([-1, 1]^d)$ and independent from $x_t, i_t$. The only difference of~\ref{eq:algo_core} with the vanilla~\ref{eq:signSGD_intro} is the additional corruption by uniform noise with carefully chosen magnitude under the $\sign$ operator. Actually, such a simple trick fixes the non-convergence issue as stated in the next result.
\begin{theorem}
    Let Assumption~\ref{ass:convex+lip} be satisfied. Let $x_1 \in \bbR^d$ and $\gamma = \tfrac{\|x_1 - x_*\|}{\sqrt{dT}}$, then~\ref{eq:algo_core} satisfies
    \begin{align*}
    f(\bar{x}_T) - f(x_*) \leq G_{\infty}\norm{x_1 - x_*}\sqrt{\frac{d}{T}}\quad\text{where } \bar{x}_T = \sum_{t \leq T} x_t\enspace.
    \end{align*}
\end{theorem}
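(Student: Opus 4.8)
The plan is to exploit the identity flagged in the introduction --- that the uniformly corrupted sign is, in conditional expectation, a rescaled (sub)gradient of $f$ --- and then run the textbook subgradient-descent analysis on the potential $\|x_t - x_*\|^2$.

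First I would establish the one-coordinate computation. Fix $x_t$ and $i_t$, write $a := g_t^j$ for the $j$-th coordinate of the chosen subgradient of $f_{i_t}$; Assumption~\ref{ass:convex+lip} gives $|a| \leq G_\infty$, so $a + G_\infty U_t^j$ is uniform on $[a - G_\infty,\, a + G_\infty]$, and hence $\Exp[\sign(a + G_\infty U_t^j)] = \tfrac{a}{G_\infty}$. Since $U_t$ has a density, the argument of $\sign$ is almost surely nonzero, so $\sign(g_t + G_\infty U_t) \in \{-1,+1\}^d$ and $\|\sign(g_t + G_\infty U_t)\|^2 = d$ deterministically. Averaging over $i_t \sim \unif(\{1,\dots,n\})$ and using that $h_t := \Exp_{i_t}[g_t]$ belongs to $\partial f(x_t)$, we obtain $\Exp[s_t \mid x_t] = h_t / G_\infty$, where $s_t := \sign(g_t + G_\infty U_t)$, so that $x_{t+1} = x_t - \gamma s_t$.

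Next I would expand $\|x_{t+1} - x_*\|^2 = \|x_t - x_*\|^2 - 2\gamma \langle s_t, x_t - x_* \rangle + \gamma^2 \|s_t\|^2$, take the conditional expectation given $x_t$, substitute $\|s_t\|^2 = d$ and $\Exp[s_t \mid x_t] = h_t/G_\infty$, and bound $\langle h_t, x_t - x_* \rangle \geq f(x_t) - f(x_*)$ by convexity. This gives $\Exp[\|x_{t+1} - x_*\|^2 \mid x_t] \leq \|x_t - x_*\|^2 - \tfrac{2\gamma}{G_\infty}\bigl(f(x_t) - f(x_*)\bigr) + \gamma^2 d$. Taking total expectations, telescoping over $t = 1, \dots, T$ (dropping the nonnegative terminal term $\Exp\|x_{T+1}-x_*\|^2$), dividing by $2\gamma T / G_\infty$, and applying Jensen's inequality $f(\bar x_T) \leq \tfrac1T \sum_{t\leq T} f(x_t)$ yields $\Exp[f(\bar x_T) - f(x_*)] \leq \tfrac{G_\infty \|x_1 - x_*\|^2}{2\gamma T} + \tfrac{G_\infty \gamma d}{2}$. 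Plugging in $\gamma = \|x_1 - x_*\|/\sqrt{dT}$ balances the two terms and produces the stated bound $G_\infty \|x_1 - x_*\| \sqrt{d/T}$.

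There is no serious obstacle here --- this is precisely why the authors advertise the analysis as elementary. The few points that need a little care are: (i) working with subgradients rather than gradients, so one must check $\Exp_{i_t}[g_t] \in \partial f(x_t)$ and that the inequality $\langle h_t, x_t-x_*\rangle \ge f(x_t)-f(x_*)$ holds for this particular selection; (ii) observing that the uniform corruption makes $\sign$ single-valued almost surely and makes $\|s_t\|^2 = d$ an exact identity rather than a bound; and (iii) reading the conclusion in expectation and with $\bar x_T = \tfrac1T\sum_{t \le T} x_t$ (the displayed formula appears to omit the $1/T$ normalization).
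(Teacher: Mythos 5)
Your proof is correct and follows essentially the same route as the paper's: the key identity $\Exp[\sign(g_t^j + G_\infty U_t^j)\mid x_t,i_t] = g_t^j/G_\infty$, the expansion of $\|x_{t+1}-x_*\|^2$, convexity, telescoping, and online-to-batch conversion. Your side remarks are also accurate — the bound should be read in expectation and $\bar x_T$ should carry the $1/T$ normalization, both of which are typos in the paper's statement.
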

\begin{proof}
Using the definition of $x_{t+1}$ in~\ref{eq:algo_core}, we obtain
\begin{align*}
    \Exp[\|x_{t+1} - x_*\|^2 \mid x_t, i_t]  = \|x_t - x_*\|^2 + 2\gamma \sum_{j = 1}^d (x_t^j - x_*^j)\Exp[\sign(g_t^j + G_{\infty} U_t^j) \mid x_t, i_t] + \gamma^2 d\enspace.
\end{align*}
Since $|g_t^j| \leq G_{\infty}$ almost surely for all $j, t$, it holds that

\begin{align}
    \label{eq:key}
    \fbox{$\Exp[\sign(g_t^j + G_{\infty} U_t^j) \mid x_t, i_t] = \frac{g_t^j}{G_{\infty}}$}\enspace.
\end{align}
Combining the above with the penultimate equality and taking total expectation, we deduce that
\begin{align}
    \label{eq:sign_to_no_sign}
    \Exp[\|x_{t+1} - x_*\|^2]  = \Exp[\|x_t - x_*\|^2] -\frac{2\gamma}{G_{\infty}} \Exp[\scalar{x_t - x_*}{\partial f(x_t)}] + \gamma^2 d\enspace.
\end{align}
Convexity of $f$, implies that
\begin{align*}
    \Exp[f(x_t) - f(x_*)] \leq G_{\infty}\parent{\frac{\Exp[\|x_t - x_*\|^2] - \Exp[\|x_{t+1} - x_*\|^2]}{2\gamma} + \frac{\gamma d}{2}}\enspace.
\end{align*}
Summing up the above bounds for $t = 1, \ldots, T$ gives a bound on the regret
\begin{align*}
    \sum_{t = 1}^T\Exp[f(x_t) - f(x_*)] \leq \frac{G_{\infty}}{2}\parent{\frac{\|x_1 - x_*\|^2}{\gamma} + \gamma d T}\enspace.
\end{align*}
The result follows from the choice of $\gamma$ and online-to-batch conversion using convexity of $f$.
\end{proof}
One can notice that the same proof works in the setup of online convex optimization. Subjectively, the proof for this setup is even simpler than for the finite sums case (since there is less randomness involved).
One also observes that the proof passes with minimal modifications to the case of projected gradient descent, mirror descent, and their dual averaging alternatives.
\begin{remark}[Clear limitation]
    We note that unlike, for example, vanilla gradient descent, where the adaptation to $G_{\infty}$ can be achieved almost for free with a proper version of AdaGrad~\cite{duchi2011adaptive, streeter2010less}, extending these ideas to the above algorithm seems non-trivial and is left for future works.
\end{remark}
\textbf{Lessons learnt.} While simple, this construction gives one key insight---for Eq.~\eqref{eq:key} to hold, we can use \emph{any} upper bound on the infinity norm of $g_t$ (or its $j$th component). Crucially, this upper bound should not depend on the randomness of $i_t$, yet it could depend on the point $x_t$ (i.e., picking $\|\nabla f_{i_t}(x_t)\|_\infty$ does not work, while $\max_{i = 1, \ldots, n}\|\nabla f_i(x_t)\|_\infty$ does). Furthermore, Eq.~\eqref{eq:sign_to_no_sign} suggests that our procedure behaves (on average) as the usual SGD iterated in the ``scalar product part'' while preserving the ``constant speed'' feature in the third quadratic term. Precisely these insights lead us to consider a variance reduction scheme to fix~\ref{eq:signSGD_intro}.

\section{Smoothness assumption for variance reduction}
Before presenting our variance reduction approach for~\ref{eq:signSGD_intro}, let us start by introducing and discussing main assumptions used in this work.
\begin{assumption}
\label{ass:smooth}
There is $p,q\geq1$, H\"older conjugates, and $L_q >0$ such that for all $i \in \{1, \ldots, n\}$, $f_i$ is continuously differentiable and for all $x, y \in \bbR^d$,
\begin{align*}
    \|\nabla f_i(x) - \nabla f_i(y)\|_{p} \leq L_{q} \|x - y\|_q\enspace.
\end{align*}
\end{assumption}
Assumption~\ref{ass:smooth} allows us to control the behavior of $f$ and $\nabla f$, which is shown by the following standard lemmas. For completeness, the proofs are provided in Appendix~\ref{app:proofs}.
\begin{lemma}\label{lm:descent}
Under Assumption~\ref{ass:smooth}, for $x,y \in \bbR^d$, the following holds
\begin{equation*}
    f(y) \leq f(x) + \scalar{\nabla f(x)}{y-x} + \frac{L_q}{2}\norm{y-x}_q^2 \enspace .
\end{equation*}
\end{lemma}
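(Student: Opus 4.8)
The plan is to run the standard ``descent lemma'' argument, adapted to the non-Euclidean pair of H\"older conjugate norms. First I would observe that the Lipschitz-gradient bound in Assumption~\ref{ass:smooth} is inherited by $f$ itself: by the triangle inequality in $\norm{\cdot}_p$ and linearity of the gradient,
\[
    \norm{\nabla f(x) - \nabla f(y)}_p \leq \frac1n \sum_{i=1}^n \norm{\nabla f_i(x) - \nabla f_i(y)}_p \leq L_q \norm{x-y}_q \enspace .
\]
This reduces the claim to the single, continuously differentiable, function $f$.

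Next I would write the exact first-order Taylor remainder of $f$ along the segment joining $x$ and $y$. With $\phi(t) \triangleq f(x + t(y-x))$ for $t \in [0,1]$, the fundamental theorem of calculus gives $f(y) - f(x) = \int_0^1 \scalar{\nabla f(x+t(y-x))}{y-x}\, dt$, and therefore
\[
    f(y) - f(x) - \scalar{\nabla f(x)}{y-x} = \int_0^1 \scalar{\nabla f(x+t(y-x)) - \nabla f(x)}{y-x}\, dt \enspace .
\]
Then I would bound the integrand by H\"older's inequality --- the single place where the choice of conjugate exponents $p,q$ enters: for each $t \in [0,1]$,
\[
    \scalar{\nabla f(x+t(y-x)) - \nabla f(x)}{y-x} \leq \norm{\nabla f(x+t(y-x)) - \nabla f(x)}_p \norm{y-x}_q \leq L_q\, t\, \norm{y-x}_q^2 \enspace ,
\]
where the last step applies the Lipschitz bound established above to the points $x+t(y-x)$ and $x$. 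Integrating $L_q t \norm{y-x}_q^2$ over $t \in [0,1]$ yields $\tfrac{L_q}{2}\norm{y-x}_q^2$, which is exactly the asserted inequality.

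There is no genuine obstacle here: this is the textbook quadratic upper bound, and the only points requiring care are keeping the $p$- and $q$-norms attached to their correct factors when invoking H\"older's inequality, and noting that continuous differentiability of each $f_i$ (hence of $f$) is enough to legitimize the fundamental theorem of calculus along the segment $[x,y]$.
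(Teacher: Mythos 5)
Your proof is correct and follows essentially the same route as the paper's: the fundamental theorem of calculus along the segment, H\"older's inequality to pair the $p$-norm of the gradient difference with the $q$-norm of $y-x$, and the Lipschitz bound integrated against $t$. The only (welcome) addition is that you explicitly verify that the $(L_q,p,q)$-Lipschitz property of each $\nabla f_i$ is inherited by $\nabla f$ via the triangle inequality, a step the paper takes for granted.
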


\begin{lemma}\label{lm:gap_lbound}
    Let Assumption~\ref{ass:smooth} hold and assume that $f$ is convex and is lower bounded by $f_*$. For all $x \in \bbR^d$, the following holds
    \begin{equation*}
        {\norm{\nabla f(x)}_{p}^2} \leq 2L_q(f(x)-f_*) \, .
    \end{equation*}
\end{lemma}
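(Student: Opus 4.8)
The plan is to reproduce the classical Euclidean argument ``$\norm{\nabla f}^2\le 2L(f-f_*)$'' while carrying the non-Euclidean pair of norms $\norm{\cdot}_p,\norm{\cdot}_q$ through a duality computation. Fix $x\in\bbR^d$. Lemma~\ref{lm:descent} holds under Assumption~\ref{ass:smooth}, so for every $z\in\bbR^d$, applying it with $y = x+z$ and using $f(x+z)\geq f_*$ gives
\[
f_* \leq f(x) + \scalar{\nabla f(x)}{z} + \frac{L_q}{2}\norm{z}_q^2\enspace .
\]
It then suffices to pick $z$ minimizing the right-hand side and rearrange.

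To perform the minimization in the $\ell_q$ geometry, I would parametrize $z = -t\,u$ with $t\geq 0$ and $\norm{u}_q = 1$, so the bound reads $f_* \leq f(x) - t\scalar{\nabla f(x)}{u} + \tfrac{L_q}{2}t^2$. Maximizing $\scalar{\nabla f(x)}{u}$ over the $\ell_q$-unit sphere yields $\norm{\nabla f(x)}_p$ by definition of the dual norm (here $p,q$ are H\"older conjugates), and in finite dimension this supremum is attained at some $u_*$. Choosing $u = u_*$ and minimizing the resulting one-dimensional quadratic $t\mapsto \tfrac{L_q}{2}t^2 - t\norm{\nabla f(x)}_p$ at $t_* = \norm{\nabla f(x)}_p/L_q$ gives
\[
f_* \leq f\!\parent{x - \tfrac{\norm{\nabla f(x)}_p}{L_q}\,u_*} \leq f(x) - \frac{\norm{\nabla f(x)}_p^2}{2L_q}\enspace ,
\]
and rearranging produces exactly $\norm{\nabla f(x)}_p^2 \leq 2L_q(f(x)-f_*)$.

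I do not expect a genuine obstacle here; the only point requiring a little care — and the reason to spell out the dual-norm maximization and the attainment of $u_*$ rather than just writing $z = -\tfrac{1}{L_q}\nabla f(x)$ as in the Euclidean case — is that the minimizer of the quadratic upper bound is no longer simply a rescaled gradient when $q\neq 2$. I would also note in passing that convexity of $f$ is not actually used in this argument: Lemma~\ref{lm:descent} plus lower boundedness already suffices, and the convexity hypothesis is there only for consistency with how the lemma is later invoked.
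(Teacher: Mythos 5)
Your proposal is correct and follows essentially the same route as the paper's proof: both minimize the quadratic upper bound from Lemma~\ref{lm:descent} by writing the displacement as $t\,u$ with $\norm{u}_q=1$, identifying the optimal direction via the dual-norm identity $\max_{\norm{u}_q=1}\scalar{\nabla f(x)}{u}=\norm{\nabla f(x)}_p$, and then minimizing the resulting one-dimensional quadratic in $t$. Your side remark that convexity of $f$ is never used (only lower boundedness) is also accurate and consistent with the paper's argument.
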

Apart the insights gained in Section~\ref{sec:trick}, these are essentially the only two results that we will need to establish theoretical guarantees for our method.

\paragraph{On the dependency of Assumption~\ref{ass:smooth} on the choice of norms.}
In most recent works on sign based algorithms, Assumption~\ref{ass:smooth} was mainly deployed with $p = 1$ and $q = +\infty$~\cite{balles2020geometry,bernstein2018signsgd} (i.e., they use $L_{\infty}$ to exhibit the rates of convergence). While we provide analysis for any conjugate pair of $p, q \geq 1$, we argue in this section that our eventual bounds are the most advantageous with $p = \infty$ and $q = 1$.
First of all, inspecting the values of $L_q$'s, we find out that the smallest one corresponds to $q = 1$. It is rather intuitive---on the left we have the smallest $\ell_{\infty}$ and on the right the largest $\ell_1$ norms.
\begin{lemma}
    \label{lem:Lq_order}
   For all $q \geq 1$ we have $L_{1} \leq L_{q}$.
\end{lemma}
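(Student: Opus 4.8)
The plan is to show that the $\ell_\infty$-to-$\ell_1$ smoothness constant is dominated by every $\ell_p$-to-$\ell_q$ smoothness constant, which amounts to showing that whenever the bound $\|\nabla f_i(x) - \nabla f_i(y)\|_p \leq L_q \|x-y\|_q$ holds for a particular Hölder pair $(p,q)$, it also holds for $(\infty, 1)$ with the same constant. Since $L_1$ is (by Assumption~\ref{ass:smooth}) the \emph{smallest} constant for which the $(\infty,1)$-inequality holds, it then follows that $L_1 \leq L_q$.

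First I would fix $i$ and $x, y \in \bbR^d$ and start from the assumed inequality $\|\nabla f_i(x) - \nabla f_i(y)\|_p \leq L_q \|x-y\|_q$. The key is the elementary monotonicity of $\ell_r$-norms in $\bbR^d$: for $1 \leq r \leq s$ we have $\|u\|_s \leq \|u\|_r$ for every $u \in \bbR^d$. Applying this on the left-hand side with $r = p$ and $s = \infty$ gives $\|\nabla f_i(x) - \nabla f_i(y)\|_\infty \leq \|\nabla f_i(x) - \nabla f_i(y)\|_p$. Applying it on the right-hand side with $r = 1$ and $s = q$ (note $q \geq 1$) gives $\|x-y\|_q \leq \|x-y\|_1$. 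Chaining these,
\begin{align*}
    \|\nabla f_i(x) - \nabla f_i(y)\|_\infty \leq \|\nabla f_i(x) - \nabla f_i(y)\|_p \leq L_q \|x-y\|_q \leq L_q \|x-y\|_1\enspace.
\end{align*}
Since this holds for all $i$ and all $x, y$, the constant $L_q$ is a valid choice of smoothness constant for the pair $(\infty, 1)$; as $L_1$ is the infimum of all such valid constants, we conclude $L_1 \leq L_q$.

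I do not anticipate a genuine obstacle here — the only subtlety worth flagging is the direction of the norm inequalities: one needs $\|\cdot\|$ to \emph{decrease} as the index increases, so that the $\ell_\infty$ norm on the gradient side is the smallest (helping us) and the $\ell_1$ norm on the argument side is the largest (also helping us). Both are used in the favorable direction, which is exactly the intuition the paper already states (``on the left the smallest $\ell_\infty$ and on the right the largest $\ell_1$ norms''). One should also take care that ``$L_q$'' is interpreted as the best constant for its pair, so that the argument genuinely compares best constants rather than arbitrary valid ones; with that reading the proof is immediate.
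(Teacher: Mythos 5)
Your proof is correct, and it takes a slightly different (and in fact more general) route than the paper's. The paper proves the lemma by passing to the Hessian: it assumes for simplicity that $f_i$ is twice continuously differentiable, identifies $L_q$ with $\sup_x \|\nabla^2 f_i(x)\|_{q \to p}$, and compares operator norms via the inclusion of the $\ell_1$ unit ball in the $\ell_q$ unit ball together with $\|\cdot\|_\infty \leq \|\cdot\|_p$. You use exactly the same two norm-monotonicity facts but apply them directly to the finite-difference form $\|\nabla f_i(x) - \nabla f_i(y)\|_p \leq L_q\|x-y\|_q$, which lets you dispense with the twice-differentiability assumption entirely and work with $L_1$ as the best constant for the $(\infty,1)$ pair. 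What your approach buys is generality (no extra regularity needed, only Assumption~\ref{ass:smooth} itself) and a cleaner logical structure (any valid constant for the pair $(p,q)$ is a valid constant for $(\infty,1)$, hence dominates the infimum $L_1$); what the paper's Hessian formulation buys is a concrete identification of each $L_q$ as an operator norm, which it reuses in Example~\ref{ex:smoothness}. Your caveat about interpreting $L_q$ as the best constant for its pair is well placed, since the lemma is only meaningful under that reading.
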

\begin{proof}
Assume for simplicity that $f_i$ is twice continuously differentiable, then
\begin{align*}
    L_1 = \|\nabla^2 f_i(x)\|_{1 \to \infty}
    &\triangleq \max\enscond{\|\nabla^2 f_i(x) u\|_{\infty}}{\|u\|_1 \leq 1}\\
    &\leq \max\enscond{\|\nabla^2 f_i(x) u\|_{p}}{\|u\|_q \leq 1} \triangleq \|\nabla^2 f_i(x)\|_{q \to p}\enspace,
\end{align*}
for all $p, q \geq 1$ and $x \in \bbR^d$. In particular, for $p, q \geq 1$ such that $1/q + 1/p = 1$.  
\end{proof}
Furthermore, let us give a simple example of a function, where $L_1$ depends on the dimension in a favourable way, that is, it is of order $1/ d$, while $L_{2} = 1$, and $L_{\infty}$ explodes with the dimension.
\begin{example}
    \label{ex:smoothness}
    Assume that $f(x) = \tfrac{1}{2}(\zeta^{\top} x)^2$, where $\zeta$ is distributed uniformly on the unit sphere, then
    \begin{align*}
        \Exp[L_1] \leq \frac{c\log(ed)}{d},\qquad L_2 = 1, \qquad \Exp[L_{\infty}] \geq \frac{d}{2} \enspace,
    \end{align*}
    for some universal $c > 0$ and almost surely for the second equality.
\end{example}
\begin{proof}
We have $\nabla^2 f(x) = \zeta\zeta^\top$. We are going to use the fact that there exists standard Gaussian $W$ such that $\zeta = W / \|W\|$ and that $W$ is independent of $W / \|W\|$~\cite[see e.g.,][Lemma 1]{SchechtmanZinn00}. We have
\begin{align*}
    \Exp[L_1] = \Exp\|\zeta\zeta^{\top}\|_{1 \to \infty} = \Exp\left[\frac{\max_{i, j}|W_iW_j|}{\|W\|^2}\right] = \frac{1}{d}\Exp\left[\max_{i, j}|W_iW_j|\right]\lesssim \frac{\log(ed)}{d}\enspace,
\end{align*}
where last inequality was obtained from the known result on the expected maxima of chi-square random variables~\cite[see e.g.,][Example 2.7]{boucheron2013concentration} using the fact that $4W_iW_j = (W_i - W_j)^2 - (W_i + W_j)^2$. Meanwhile, clearly
\begin{align*}
    L_2 = \|\zeta\zeta^{\top}\|_{2 \to 2} = 1 \qquad\text{almost surely}\enspace.
\end{align*}
Finally, for the $L_{\infty}$, it holds that
\begin{equation*}
    \Exp[L_{\infty}] = \Exp\norm{\zeta\zeta^\top}_{\infty \to 1} = \Exp\|\zeta\|_1^2 = \frac{\Exp[\|W\|_1^2]}{d} = \frac{d + {\frac{2}{\pi}}(d^2 - d)}{d} = \parent{1 - \frac{2}{\pi}} + \frac{2}{\pi}d\enspace.\qedhere
\end{equation*}
\end{proof}
Thus, for this example, $L_1 \asymp_{\log d} L_2 / d$, while $L_2$ is dimension independent. Our bounds will eventually involve expressions of the form $d L_1$, the above discussion suggests that these quantities can morally be seen as constants. Let us also remark that all our proof techniques work with $L_{\infty}$, making the dimension dependency virtually disappear from some bounds. Yet, we believe that such a view is faulty since $L_{\infty}$ does depend on the dimension in many situations as highlighted in the above example. Let us finally remark that one can obtain high probability bounds instead of in-expectation relying on, for example,~\cite[Lemma 1 of][]{laurent2000adaptive}.

\section{Main results: SignSVRG \Varone and \Vartwo}

\begin{algorithm}[t!]
\DontPrintSemicolon 
\caption{SignSVRG (\Varone) and (\Vartwo)}\label{algo:SignSVRG}
\SetKwInput{Input}{Input}
   \SetKwInOut{Output}{Output}
   \SetKwInput{Initialization}{Initialization}
   \Input{$x_1, L_q, D, \gamma$,}
   \Initialization{$k(1) = 1$ ($k(t)$ denotes the number of reference points up to time $t$)}

$\tilde{x}_{k(1)} \gets x_1$ \tcp*[r]{{\small set initial reference point}}

$g_{k(1)} \gets \nabla f(x_1)$ \tcp*[r]{{\small compute full gradient at the beginning}}

\For{$t=1,\ldots,T$}{
    $i_t \sim \text{Unif}(\{1, \ldots, n\})$ \tcp*[r]{{\small sample function}}

    $U_t \sim \text{Unif}([-1, 1]^d)$ \tcp*[r]{{\small sample corruption}}

   \If(\tcp*[f]{\small for \Varone}){\Varone}{
   $G_t^1 \gets L_q \|x_t - \tilde{x}_{k(t)}\|_q + \|g_{k(t)}\|_{p}$ \tcp*[r]{{\small compute upper bound}}
   
   $G_t = (G_1^1, \ldots, G_t^1)^{\top}$
   }

   \If(\tcp*[f]{\small for \Vartwo}){\Vartwo}{
$G_t^j \gets L_q \|x_t - \tilde{x}_{k(t)}\|_q + |g_{k(t)}^j|$ \tcp*[r]{{\small compute upper bound}}

    $G_t = (G_t^1, \ldots, G_t^d)^{\top}$
   
   }

    $x_t^+ \gets x_t - \gamma \sign(\nabla f_{i_t}(x_t) - \nabla f_{i_t}(\tilde{x}_{k(t)}) + g_{k(t)} + G_t \otimes U_t)$ \tcp*[r]{{\small next candidate}}

    \If(\tcp*[f]{\small if not too far from the reference point}){$\|x_{t}^+ - \tilde x_{k(t)}\|_q \leq D$\label{line:check}}{
    $x_{t + 1} \gets x_t$ \tcp*[r]{{\small make a step}}
    
    $k(t+1) \gets k(t)$ \tcp*[r]{{\small keep the same reference point}}
    
    }
    \Else(\tcp*[f]{\small otherwise, update reference point}){
    $k(t+1) \gets k(t) + 1$ \;
    $\tilde{x}_{k(t + 1)} \gets x_t$ \;
    $g_{k(t + 1)} \gets \nabla f(x_t)$ \tcp*[r]{{\small recompute full gradient}}
    
    }
    }

\end{algorithm}

In this section we present our main algorithm SignSVRG, which combines ideas from Section~\ref{sec:trick} with a variance reduction technique. It is formally presented in Algorithm~\ref{algo:SignSVRG}.
\paragraph{Variance reduction methods.} The idea behind variance reduction methods is to combine computational advantages of SGD with good convergence properties of deterministic gradient descent. This is done by recomputing, from time to time, the full gradient of $f$, and using this estimation to reduce the variance of stochastic iterates. For instance, one iteration of the celebrated SVRG~\cite{johnson2013accelerating, reddi2016stochastic} is written as $x_{t+1} = x_{t} - \gamma v_t$, with $v_t = \nabla f_{i_t}(x_t) - \nabla f_{i_t}(\tilde x) + \nabla f(\tilde x)$ and where $\tilde x$ is one of the previous iterates. Note that $v_t$ is still an unbiased estimator of $\nabla f(x_t)$: $\Exp[v_t] = \nabla f(x_t)$. Moreover, $\Var(v_t) = \Exp \norm{\nabla f_{i_t}(x_t) - \nabla f_{i_t}(\tilde x) + \nabla f(x_t)- \nabla f(\tilde x)}^2 \leq 2 L_2^2 \Exp\norm{x_t - \tilde x}^2$. As $x_t$ and $\tilde x$ are expected to converge to a critical point $x_*$, the variance of $v_t$ goes to zero, recovering convergence rates of the deterministic gradient descent. Besides, such rates are obtained without updating the reference point $\tilde{x}$ too often---computational requirements of SVRG are comparable to that of SGD.
\paragraph{SignSVRG (\Varone) description.} We are now ready to present our method: \emph{SignSVRG}, which generates its iterates according to Algorithm~\ref{algo:SignSVRG}. The starting assumption of SignSVRG is the same as in usual variance reduction methods: to accelerate the convergence we are willing, from time to time, to compute the full gradient of our objective. The new insight is that, in this way, we actually have a control on the upper bound of our stochastic gradients. Indeed, notice that in one iteration of SVRG, $\norm{v_t}_p \leq \norm{\nabla f_{i_t}(x_t) - \nabla f_{i_t}(\tilde x)}_p + \norm{\nabla f(\tilde x)}_p \leq L_q \norm{x_t - \tilde x}_q + \norm{\nabla f(\tilde x)}_p \triangleq G_t $. Hence, writing an iteration of the form $x_{t+1} = x_t - \gamma \sign (v_t + G_t U_t)$, similarly to Section~\ref{sec:trick}, on average, we obtain an algorithm that goes in the direction of the negative gradient.
A key difference of our method with SVRG lies in the fact that $\Var(\sign(v_t + G_t U_t)) \approx d$ is not expected to go to zero. For this reason, we are not expecting to have the usual properties of variance reduction methods such as an average decrease after one epoch or a linear convergence for strongly convex functions~\cite[see e.g.,][]{reddi2016stochastic,pmlr-v108-gorbunov20a}. Nevertheless, we are able to show that our algorithm, on smooth convex and non-convex functions, recovers rates of the deterministic gradient descent. We present two variants of SignSVRG---\Varone enjoys non-convex and convex rates of convergence, while \Vartwo closely mimics~\ref{eq:def_signGD}.

\begin{theorem}\label{thm:main_non_cvx}
    Let Assumption~\ref{ass:smooth} be satisfied, then for all $D, \gamma >0$, \Varone of Algorithm~\ref{algo:SignSVRG} satisfies
        \begin{align*}
    \frac{1}{T}\Exp \left[\sum_{t=1}^{T} \frac{\norm{\nabla f(x_t)}^2}{2L_{q} D  + \norm{\nabla f(x_t)}_{p}}  \right]\leq \frac{\Exp[f(x_1)  - f(x_{T+1})]}{T\gamma}  + \frac{L_{q}\gamma d^{\sfrac{2}{q}}}{2} \enspace ,
\end{align*}
and \Vartwo of Algorithm~\ref{algo:SignSVRG} satisfies
  \begin{align*}
    \frac{1}{T} \Exp\left[ \sum_{t=1}^T\frac{\norm{\nabla f(x_t)}_1^2}{ 2L_q d D + \norm{\nabla f(x_t)}_1}\right] \leq \frac{\Exp[f(x_1)  - f(x_{T+1})]}{T \gamma } + \frac{L_q \gamma d^{\sfrac{2}{q}}}{2} \enspace .
\end{align*}
\end{theorem}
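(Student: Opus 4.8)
The plan is to treat \Varone and \Vartwo with a single argument that is just Lemma~\ref{lm:descent} fed by the ``core trick'' of Section~\ref{sec:trick}. Write $v_t = \nabla f_{i_t}(x_t) - \nabla f_{i_t}(\tilde x_{k(t)}) + g_{k(t)}$ and $s_t = \sign(v_t + G_t\otimes U_t)$, and let $\mathcal F_t$ gather the randomness produced strictly before $i_t, U_t$ are drawn, so that $x_t$, $\tilde x_{k(t)}$, $g_{k(t)}$ and $G_t$ are $\mathcal F_t$-measurable while $(i_t, U_t)$ is independent of $\mathcal F_t$. Since the iterate moves by $x_{t+1} - x_t = -\gamma s_t$ with $\norm{s_t}_q = d^{1/q}$ almost surely, Lemma~\ref{lm:descent} gives $f(x_{t+1}) \le f(x_t) - \gamma\scalar{\nabla f(x_t)}{s_t} + \frac{L_q\gamma^2 d^{2/q}}{2}$, so the whole proof reduces to lower bounding $\Exp[\scalar{\nabla f(x_t)}{s_t}\mid\mathcal F_t]$ and telescoping.

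First I would record the invariant $\norm{x_t - \tilde x_{k(t)}}_q \le D$, valid at every iteration by construction: it holds at $t=1$; when the reference point is retained it is precisely the acceptance test; and right after a reference-point refresh the current iterate lies within $\gamma d^{1/q}\le D$ of the newly chosen reference. Then comes the core trick: by Assumption~\ref{ass:smooth}, for \emph{every} realisation of $i_t$ one has $|v_t^j|\le\norm{v_t}_p\le L_q\norm{x_t - \tilde x_{k(t)}}_q + \norm{g_{k(t)}}_p = G_t^1$ in \Varone and $|v_t^j|\le L_q\norm{x_t - \tilde x_{k(t)}}_q + |g_{k(t)}^j| = G_t^j$ in \Vartwo, with $G_t$ being $\mathcal F_t$-measurable. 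Hence, exactly as in~\eqref{eq:key}, conditioning additionally on $i_t$ and integrating the uniform $U_t^j$ gives $\Exp[s_t^j\mid\mathcal F_t, i_t] = v_t^j/G_t^j$; averaging over $i_t$ and using the SVRG unbiasedness $\Exp[v_t\mid\mathcal F_t] = \nabla f(x_t)$ then gives $\Exp[s_t^j\mid\mathcal F_t] = (\nabla f(x_t))^j/G_t^j$, whence $\Exp[\scalar{\nabla f(x_t)}{s_t}\mid\mathcal F_t] = \sum_{j=1}^{d}\frac{((\nabla f(x_t))^j)^2}{G_t^j}$.

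Next I would control the denominators by combining the invariant with the $L_q$-smoothness of $f$: since $\norm{\nabla f(\tilde x_{k(t)})}_p\le\norm{\nabla f(x_t)}_p + L_q\norm{x_t - \tilde x_{k(t)}}_q$ and likewise coordinatewise, one gets $G_t^1\le 2L_q D + \norm{\nabla f(x_t)}_p$ and $G_t^j\le 2L_q D + |(\nabla f(x_t))^j|$. For \Varone this already gives $\Exp[\scalar{\nabla f(x_t)}{s_t}\mid\mathcal F_t]\ge\frac{\norm{\nabla f(x_t)}^2}{2L_q D + \norm{\nabla f(x_t)}_p}$. For \Vartwo I would apply the Cauchy--Schwarz (Engel) inequality $\sum_j \frac{a_j^2}{b_j}\ge\frac{(\sum_j a_j)^2}{\sum_j b_j}$ with $a_j = |(\nabla f(x_t))^j|$ and $b_j = 2L_q D + a_j$, whose sum equals $2L_q dD + \norm{\nabla f(x_t)}_1$, obtaining $\Exp[\scalar{\nabla f(x_t)}{s_t}\mid\mathcal F_t]\ge\frac{\norm{\nabla f(x_t)}_1^2}{2L_q dD + \norm{\nabla f(x_t)}_1}$. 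Plugging into the descent inequality, taking total expectation, rearranging to $\gamma\Exp[\cdot]\le\Exp[f(x_t) - f(x_{t+1})] + \frac{L_q\gamma^2 d^{2/q}}{2}$, summing over $t=1,\dots,T$ (the $f$-values telescope) and dividing by $T\gamma$ yields both stated bounds.

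The part that needs the most care is not any single computation but the bookkeeping around the reference point: one must verify that $\norm{x_t - \tilde x_{k(t)}}_q\le D$ genuinely holds at \emph{every} iteration, in particular those immediately following a full-gradient recomputation, so that $G_t$ is bounded uniformly; and one must ensure that the acceptance/rejection decision never becomes entangled with the randomness in $s_t$ --- which is why the step $x_{t+1} = x_t - \gamma s_t$ is performed unconditionally while only the reference point is (or is not) refreshed, leaving the identity $\Exp[s_t^j\mid\mathcal F_t] = (\nabla f(x_t))^j/G_t^j$ clean. Apart from this, the only genuinely new inequality beyond Section~\ref{sec:trick} is the Cauchy--Schwarz rearrangement used for \Vartwo.
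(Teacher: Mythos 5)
Your proof is correct and follows essentially the same route as the paper's: the descent lemma applied to the unconditional step $x_{t+1}-x_t=-\gamma s_t$, the conditional identity $\Exp[s_t^j\mid \mathcal{F}_t]=(\nabla_j f(x_t))/G_t^j$ obtained from the core trick plus SVRG unbiasedness, the bound $G_t^j\le 2L_qD+\cdots$ via the acceptance test and gradient Lipschitzness, and telescoping. The only (cosmetic) difference is that for \Vartwo you aggregate the coordinates with the Engel form of Cauchy--Schwarz, whereas the paper applies Jensen's inequality to the convex map $x\mapsto x^2/(x+b)$ --- these yield the identical bound $\norm{\nabla f(x_t)}_1^2/(2L_qdD+\norm{\nabla f(x_t)}_1)$ --- and your explicit verification of the invariant $\norm{x_t-\tilde x_{k(t)}}_q\le D$ (which at refresh steps silently uses $\gamma d^{1/q}\le D$, a condition the theorem's ``for all $D,\gamma>0$'' does not formally guarantee but which holds for the parameters of Corollary~\ref{cor:non_cvx}) is if anything more careful than the paper, which leaves this bookkeeping implicit.
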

\begin{proof}
    Fix some $t = 1, \ldots, T - 1$ so that $k({t+1}) = k$. By Lemma~\ref{lm:descent} we have
\begin{align*}
    f(x_{t+1}) \leq f(x_t) - \gamma\scalar{\nabla f(x_t)}{\sign\parent{\nabla f_{i_t}(x_t) - \nabla f(\tilde{x}_{k(t)}) + \nabla f(\tilde{x}_{k(t)}) + G_t \otimes U_t}} + \frac{L_{q}}{2}\gamma^2 d^{\sfrac{2}{q}}\enspace.
\end{align*}
Note that for both variants, the $j$-th coordinate of $\nabla f_{i_t}(x_t) - \nabla f_{i_t}(\tilde{x}_{k(t)}) + \nabla f(\tilde{x}_{k(t)})$ is almost surely bounded by $G_t^j$. Thus, taking the expectation w.r.t. $H_t = \sigma(x_1, \ldots, x_t, U_1, \ldots, U_{t-1})$, we deduce that almost surely
\begin{align}
    \label{eq:non_cvx}
    \Exp[f(x_{t+1}) \mid H_t] \leq f(x_t) - \gamma \sum_{j=1}^d\frac{(\nabla_j f(x_t))^2}{G_t^j} + \frac{L_{q}}{2}\gamma^2 d^{\sfrac{2}{q}}\enspace.
\end{align}
By design (see line~\ref{line:check} of Algorithm~\ref{algo:SignSVRG}) and thanks to the Lipschitzness of the gradient, for $j \in \{1, \dots, d\}$, we have almost surely
\begin{align*}
 \textrm{\Varone:}\quad   G_t^j \leq 2L_{q} D  + \norm{\nabla f(x_t)}_{p} \qquad \textrm{or} \qquad \textrm{\Vartwo:} \quad G_t^j \leq 2 L_q D + |\nabla_j f(x_t)|\enspace.
\end{align*}
Thus, taking total expectation in~\eqref{eq:non_cvx} and re-arranging, we obtain, for the first variant,
\begin{align}\label{eq:svrg_desc1}
    \Exp \left[ \frac{\norm{\nabla f(x_t)}^2}{2L_{q} D  + \norm{\nabla f(x_t)}_{p}} \right] \leq \frac{\Exp[f(x_t)  - f(x_{t+1})]}{\gamma}  + \frac{L_{q}}{2}\gamma d^{\sfrac{2}{q}}\enspace,
\end{align}
and for the second variant (\Vartwo), 
\begin{align}\label{eq:svrg_desc2}
  \Exp\left[ \frac{\frac{1}{d}\norm{\nabla f(x_t)}_1^2}{ 2L_q D + \frac{1}{d}\norm{\nabla f(x_t)}_1}\right] \leq\Exp\left[ \sum_{j = 1}^d\frac{(\nabla_j f(x_t))^2}{2L_qD + |\nabla_j f(x_t)|} \right] \leq \frac{\Exp[f(x_t) {-} f(x_{t+1})]}{\gamma} {+} \frac{L_{q}}{2}\gamma d^{\sfrac{2}{q}}\,, 
 \end{align}
where we have used  the fact that for any $b >0$, $x \mapsto x^2/(x+b)$ is convex for $x \geq 0$. Summing up Eq.~\eqref{eq:svrg_desc1} and~\eqref{eq:svrg_desc2} concludes the proof.
\end{proof}

An appropriate choice of the step-size $\gamma > 0$, immediately gives the following convergence rate for both variants of SignSVRG on smooth non-convex objectives.
\begin{corollary}
\label{cor:non_cvx}
Let Assumption~\ref{ass:smooth} be satisfied and assume that $f$ is bounded below by $f_*$. Fix $T>0$, $D = \sfP\sqrt{2 / L_q T}$ for some $\sfP > 0$ and $\gamma = \tfrac{1}{d^{\sfrac{1}{q}}}\sqrt{\tfrac{2}{L_q T}}$. Then, \Varone of Algorithm~\ref{algo:SignSVRG} satisfies
\begin{align*}
     \Exp[\norm{\nabla f(\bar{x}_T)}_{p}] \leq 2\sfP\sqrt{\frac{2L_q}{T}}\qquad\text{or}\qquad \frac{\Exp[\norm{\nabla f(\bar{x}_T)}]^2}{\Exp[\norm{\nabla f(\bar{x}_T)}_{p}]} \leq d^{\sfrac{1}{q}} (f(x_1)  - f(x_*)  + 1)\sqrt{\frac{2L_q}{T}}\enspace,
 \end{align*}
where $\bar{x}_T$ is a random point along the trajectory of the algorithm. Similarly, \Vartwo of Algorithm~\ref{algo:SignSVRG} satisfies
\begin{equation*}
    \Exp[\norm{\nabla f(\bar x_T)}_1] \leq \sqrt{\frac{2L_q}{T}}\max \left( d^{\sfrac{1}{q}} (f(x_1)  - f(x_*)  + 1),\,  2d\sfP\right) \enspace .
\end{equation*}
\end{corollary}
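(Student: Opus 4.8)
The corollary is a direct consequence of Theorem~\ref{thm:main_non_cvx}: the plan is to substitute the prescribed $D$ and $\gamma$, simplify the right-hand sides, lower bound the left-hand sides via Jensen's inequality after reading the averages as expectations over a random iterate $\bar x_T$, and finally split into two regimes according to the size of the expected gradient norm. First, with $D = \sfP\sqrt{\sfrac{2}{L_q T}}$ and $\gamma = \tfrac{1}{d^{\sfrac{1}{q}}}\sqrt{\sfrac{2}{L_q T}}$, a one-line computation gives $\tfrac{1}{T\gamma} = \tfrac{L_q \gamma d^{\sfrac{2}{q}}}{2} = d^{\sfrac{1}{q}}\sqrt{\sfrac{L_q}{2T}}$, so that, using $f(x_{T+1}) \geq f_*$, the right-hand side of each bound in Theorem~\ref{thm:main_non_cvx} is at most $\tfrac{1}{2}d^{\sfrac{1}{q}}(f(x_1) - f_* + 1)\sqrt{\sfrac{2L_q}{T}}$. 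In the same way $2 L_q D = 2\sfP \sqrt{\sfrac{2 L_q}{T}}$ for \Varone and $2 L_q d D = 2 d \sfP\sqrt{\sfrac{2L_q}{T}}$ for \Vartwo.

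Next I would let $\bar x_T$ be uniform on $\{x_1, \dots, x_T\}$ and independent of the trajectory, so that the left-hand side of each inequality in Theorem~\ref{thm:main_non_cvx} equals an expectation of the form $\Exp[\varphi(\nabla f(\bar x_T))]$. For \Varone I would use that $(a,b) \mapsto a^2/(c + b)$ is jointly convex on $\bbR \times (-c, \infty)$ --- it is the perspective function of $a \mapsto a^2$, recentred by $c$ --- which by Jensen yields
\[
\Exp\!\left[\frac{\norm{\nabla f(\bar x_T)}^2}{c + \norm{\nabla f(\bar x_T)}_p}\right] \geq \frac{\Exp[\norm{\nabla f(\bar x_T)}]^2}{c + \Exp[\norm{\nabla f(\bar x_T)}_p]}, \qquad c = 2L_q D .
\]
For \Vartwo, the one-variable map $a \mapsto a^2/(c+a) = a - c + c^2/(c+a)$ is convex on $[0,\infty)$ (as already used in the proof of Theorem~\ref{thm:main_non_cvx}), giving the analogous bound with $a = \norm{\nabla f(\bar x_T)}_1$ and $c = 2 L_q d D$.

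Finally I would split into two cases. If $\Exp[\norm{\nabla f(\bar x_T)}_p] \leq c$ (resp. $\Exp[\norm{\nabla f(\bar x_T)}_1] \leq c$), then, by the value of $c$ computed in the first step, this is precisely the first claimed bound $\Exp[\norm{\nabla f(\bar x_T)}_p] \leq 2\sfP\sqrt{\sfrac{2L_q}{T}}$ (resp. $\Exp[\norm{\nabla f(\bar x_T)}_1] \leq 2d\sfP\sqrt{\sfrac{2L_q}{T}}$, which lies below the claimed maximum). Otherwise $c + \Exp[\norm{\cdot}] \leq 2\,\Exp[\norm{\cdot}]$, so the Jensen lower bound is at least $\tfrac12 \Exp[\norm{\nabla f(\bar x_T)}]^2/\Exp[\norm{\nabla f(\bar x_T)}_p]$ for \Varone (resp. $\tfrac12\Exp[\norm{\nabla f(\bar x_T)}_1]$ for \Vartwo); comparing with the simplified right-hand side and multiplying by $2$ yields the second claimed inequality, the extra factor $2$ being exactly what turns $\sqrt{\sfrac{L_q}{2T}}$ into $\sqrt{\sfrac{2L_q}{T}}$. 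The argument is entirely routine; the only things requiring care are the case split and the bookkeeping of the numerical constants, so I do not anticipate any genuine obstacle.
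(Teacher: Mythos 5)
Your proposal is correct and follows essentially the same route as the paper's own proof: substitute the prescribed $\gamma$ and $D$ to equalize the two right-hand terms, pass to a uniformly sampled iterate $\bar x_T$, apply Jensen via convexity of $(x,y)\mapsto x^2/(b+y)$ (resp. $x\mapsto x^2/(b+x)$ for \Vartwo), and conclude by the same two-case split on whether the expected gradient norm exceeds $2L_qD$ (resp. $2L_q dD$). The constants all check out, so there is nothing to add.
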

In particular, for \Varone (similar for \Vartwo, but with different norm), to sample a point $x$ with
\begin{equation}\label{eq:sam_point}
    \max \left(   \Exp[\norm{\nabla f(x)}_{p}]\, , \, \frac{\Exp[\norm{\nabla f(x)}]^2}{\Exp[\norm{\nabla f(x)}_{p}]}\right) = \cO(\varepsilon) \enspace,
\end{equation}
we need to run Algorithm~\ref{algo:SignSVRG} for $T = \cO(\varepsilon^{-2})$ iterations (note that the above bound is the most advantageous with $p = \infty$ and $q = 1$ since $\|\cdot\|_{\infty} \leq \|\cdot\|_2$). Ignoring, for the moment, the dependency on the dimension, this convergence rate matches the one of the deterministic gradient descent as is achieved by SVRG~\cite{johnson2013accelerating,reddi2016stochastic}.

It is also informative to compare the rate of \Vartwo with that of the full~\ref{eq:def_signGD}, that takes the form
\begin{align}
    \tag{SignGD}
    \label{eq:def_signGD}
    x_{t+1} = x_t - \gamma \sign(\nabla f(x_t))\enspace,
\end{align}
We prove in appendix that choosing $\gamma =\frac{1}{d^{1/q}} \sqrt{\frac{2}{TL_q}}$ (same step-size),~\ref{eq:def_signGD} satisfies
\begin{equation*}
    \Exp\norm{\nabla f(\bar{x}_T)}_1 \leq d^{1/q}\sqrt{\frac{L_q}{2T}}(f(x_1) - f_* + 1) \enspace.
\end{equation*}
That is, \Vartwo has nearly the same convergence guarantee as the full~\ref{eq:def_signGD}, while preserving some computational advantages of the~\ref{eq:signSGD_intro}. If only the convergence w.r.t. the number of iterations $T$ is of concern, same conclusion is applicable to the first variant (\Varone) of our algorithm.

Let us discuss the dimension dependency of the second variant (\Vartwo) and argue that in some favourable cases the guarantee of Corollary~\ref{cor:non_cvx} can be seen as dimension-free.
\begin{remark}[On the dimension]
    \label{rem:dimensions}
    Corollary~\ref{cor:non_cvx} involves two rates for both variants.
    Let us mainly focus on the second variant (\Vartwo) for this discussion as the same arguments apply for the first one.
    For $q=\infty$, the commonly used setup in the literature, our bound reads as
    \begin{equation*}
        \Exp[\norm{\nabla f(\bar x_T)}_1] \leq \sqrt{\frac{2L_{\infty}}{T}}\max \left( (f(x_1)  - f(x_*)  + 1),\,  2d\sfP \right) \enspace .
    \end{equation*}
    Thus, in the unfavourable case of Example~\ref{ex:smoothness}, the right hand-side of the above bound growth as $d^{3/2}$. In other words, even if $\norm{\nabla f(\bar x_T)}_1 \approx \sqrt{d}\norm{\nabla f(\bar x_T)}_2$ (which happens on non-sparse gradients) the dependency on the dimension is still present. In contrast, with $q = 1$ and assuming the favourable case of Example~\ref{ex:smoothness}, $L_1 \approx 1/d$ we get 
    \begin{align*}
        {\Exp[\norm{\nabla f(\bar{x}_T)}_1]} \lesssim \sqrt{\frac{d}{T}}\max \left( (f(x_1)  - f(x_*)  + 1)/d,\,  \sfP \right)\enspace.
    \end{align*}
    In particular, for the usual case of $\norm{\nabla f(\bar{x}_T)}_1 \approx \sqrt{d}\norm{\nabla f(\bar{x}_T)}_2$, our bound could actually be dimension independent for the Euclidean norm of the gradient, which is not the case for any other geometry. Thus, we believe that the case of $q=1$ is intrinsic for SignSVRG. 
 \end{remark}
\paragraph{Frequency of reference point update.}
In view of Remark~\ref{rem:dimensions}, we fix $q = 1$.
Naturally, one is interested by the question of the frequency of the updates of the reference point $\tilde{x}$. Indeed, if we update it too often, then the computational complexity will be similar to the one of the deterministic (sign) gradient descent, making it prohibitive for large scale finite sums. We first note that in Algorithm~\ref{algo:SignSVRG}, $k(T)$ stands for the total number of reference point updates. The reference point is updated iff the condition in line~\ref{line:check} of Algorithm~\ref{algo:SignSVRG} is met (for both variants). We can state the following result.
\begin{lemma}
    \label{lem:ref_point_freq}
    Let assumptions of Corollary~\ref{cor:non_cvx} be satisfied. The number of reference point updates $k(T)$, for both variants of Algorithm~\ref{algo:SignSVRG}, satisfies $k(T) \leq \lceil \sfrac{T}\sfP \rceil$.
\end{lemma}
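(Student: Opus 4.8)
The plan is to show that, under the calibration $\gamma = d^{-1/q}\sqrt{2/(L_q T)}$ and $D = \sfP\sqrt{2/(L_q T)}$ of Corollary~\ref{cor:non_cvx}, the reference point cannot be refreshed more often than once every $\sfP$ iterations, so that $\lceil T/\sfP\rceil$ refreshes — the initial one included — suffice over $T$ iterations; the same argument applies verbatim to both variants of the algorithm. The single quantitative ingredient is a per-iteration displacement bound: for either variant the iterate moves by $x_{t+1}-x_t = -\gamma\,\sign(\cdot)$ with $\sign(\cdot)\in\{-1,0,1\}^d$, so $\|x_{t+1}-x_t\|_q = \gamma\|\sign(\cdot)\|_q \leq \gamma d^{1/q}$, and likewise $\|x_t^+-x_t\|_q\leq\gamma d^{1/q}$; plugging in the prescribed $\gamma$ and $D$ this becomes $\|x_{t+1}-x_t\|_q\leq D/\sfP$, i.e.\ one iteration covers at most a fraction $1/\sfP$ of the trust radius $D$. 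This is the only place the particular choice of step size enters.

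Next I would partition the iterations into the blocks delimited by reference-point updates. Let $1\leq\tau_1<\dots<\tau_m\leq T-1$ be the iterations at which the test of line~\ref{line:check} fails, so that $k(T)=m+1$, and set $\tau_0:=0$. On the block $\{\tau_{i-1}+1,\dots,\tau_i\}$ the algorithm runs with a single reference point $\tilde x$, which coincides with the iterate opening that block; hence the $\ell_q$-displacement from $\tilde x$ is $0$ at the start of the block. The block closes precisely because the test fails at iteration $\tau_i$, i.e.\ $\|x_{\tau_i}^+-\tilde x\|_q>D$. Writing this displacement as a telescoping sum of the $\tau_i-\tau_{i-1}$ successive per-iteration increments and applying the triangle inequality together with the bound above yields
\begin{align*}
    D < \|x_{\tau_i}^+-\tilde x\|_q \leq (\tau_i-\tau_{i-1})\,\frac{D}{\sfP}\,,
\end{align*}
so that $\tau_i-\tau_{i-1}>\sfP$ for every $i=1,\dots,m$.

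Finally I would sum over $i$: $\tau_m=\sum_{i=1}^m(\tau_i-\tau_{i-1})>m\sfP$, and since $\tau_m\leq T-1<T$ this gives $m\sfP<T$, i.e.\ $m<T/\sfP$; as $m$ is a nonnegative integer it follows that $m\leq\lceil T/\sfP\rceil-1$, hence $k(T)=m+1\leq\lceil T/\sfP\rceil$. The only delicate point — and the sole step I would double-check — is the bookkeeping at the block boundaries: one must use that the reference point in force on a block is exactly the iterate that opens it, so that the accumulated displacement is measured starting from $0$ rather than from an offset of order $\gamma d^{1/q}$ (the latter would only give blocks of length exceeding $\sfP-1$ and cost a constant in the bound). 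Everything else is a one-line triangle inequality, and it is uniform in the exponent $q$.
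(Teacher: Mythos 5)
Your proof is correct and follows essentially the same route as the paper's: the single quantitative ingredient in both is that one iteration moves the iterate by at most $\gamma d^{1/q} = D/\sfP$ in $\ell_q$-norm under the calibration of Corollary~\ref{cor:non_cvx}, so by the triangle inequality the candidate cannot leave the trust region of radius $D$ around the current reference point in fewer than $\sfP$ steps, forcing at least $\sfP$ iterations between consecutive refreshes. Your version merely adds the explicit block decomposition and final counting (and keeps general $q$, where the paper specializes to $q=1$), which is a harmless refinement of the same argument.
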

\begin{proof}
Denote by $\tau$ the first iteration $t$ such that $k(t) = k({\tau})$ in Algorithm~\ref{algo:SignSVRG}---the first moment of reference point update for $t$. For $t \leq \sfP$, we obtain by the triangle inequality that
\begin{align*}
    \norm{\tilde{x}_{k(\tau)} - x_{\tau + t}}_1 \leq t\gamma d \leq {\sfP}\sqrt{2 / {L_1T}}\enspace,
\end{align*}
where the last inequality follows by substitution of $\gamma$ from Corollary~\ref{cor:non_cvx} and our assumption $t \leq \sfP$. In other words, once the reference point is updated, our algorithm after $\sfP$ iterations stays withing $\tfrac{\sfP\sqrt{2}}{\sqrt{L_1T}}$ radius of the reference point in $\ell_1$-norm. Setting $D = \tfrac{\sfP\sqrt{2}}{\sqrt{L_1T}}$, as in Corollary~\ref{cor:non_cvx}, we are sure that the reference point is updated at most every $\sfP$ rounds, that is $k(T) \leq \lceil \tfrac{T}\sfP \rceil$ (i.e. $\sfP$ stands for period).
\end{proof}
Note that Lemma~\ref{lem:ref_point_freq} only gives a rough upper bound and the actual update frequency can be much lower.
\paragraph{Communication overhead.} The choice of period $\sfP$ in Corollary~\ref{cor:non_cvx} also affects the communication efficiency of the algorithm. For example, for $\sfP = n$ the algorithm communicates the full gradient once in every $n$ iterations and the other $n - 1$ iterations it communicates $d$-bits. That is, for $P = n$, the total number of bit communications after $T$ iterations (assuming that float takes $32$ bits) is at most $dT(32 + (1 - 1/n))$. For example, vanilla~\ref{eq:signSGD_intro} (which does not converge) would communicate $dT$ bits after $T$ rounds and vanilla SGD $32dT$. That is, the gain in communication efficiency is marginal for $\sfP = n$.
In contrast, setting $\sfP = \tfrac{32n - 1}{\beta}$ for some $\beta > 0$, reduces the number of communicated bits after $T$ rounds to at most $(1 + \beta) dT$, which nearly matches that of~\ref{eq:signSGD_intro} and introduces a factor of $O(n)$ in the convergence rate. We state the following result.
\begin{lemma}
    Let assumptions of Corollary~\ref{cor:non_cvx} be satisfied and $\sfP \in \mathbb{N}$. Assuming that float takes $\mathsf{F}$ bits, gradient communications to the center for both variants of Algorithm~\ref{algo:SignSVRG} after $T$ iterations is at most $d(\mathsf{F}n + \sfP - 1) \lceil\tfrac{T}{\sfP} \rceil$ bits.
\end{lemma}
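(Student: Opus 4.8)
The plan is an elementary counting argument, identical for both variants since they differ only in the computation of $G_t$, which does not affect what is transmitted. I would partition the $T$ rounds of Algorithm~\ref{algo:SignSVRG} into \emph{reference-update} rounds, on which the full gradient $\nabla f(x_t)=\tfrac1n\sum_{i=1}^n\nabla f_i(x_t)$ is recomputed (attributing the initialization $g_{k(1)}\gets\nabla f(x_1)$ to round $1$), and the remaining \emph{sign} rounds. On a reference-update round each worker $i\in\{1,\dots,n\}$ sends its local gradient $\nabla f_i(x_t)\in\bbR^d$ to the center, i.e.\ $d$ floats, which at $\mathsf{F}$ bits per float is $d\mathsf{F}n$ bits; on a sign round only the vector $\sign(\nabla f_{i_t}(x_t)-\nabla f_{i_t}(\tilde x_{k(t)})+g_{k(t)}+G_t\otimes U_t)$ is sent, i.e.\ $d$ bits.

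By construction the number of reference-update rounds up to time $T$ is exactly $k(T)$, and Lemma~\ref{lem:ref_point_freq}---whose hypotheses are precisely those assumed here (inherited from Corollary~\ref{cor:non_cvx})---yields $k(T)\le\lceil T/\sfP\rceil$. Denoting by $a$ the number of reference-update rounds, the total number of bits communicated to the center is
\begin{align*}
    d\mathsf{F}n\, a + d\,(T-a) \;=\; d\bigl((\mathsf{F}n-1)\,a+T\bigr)\enspace.
\end{align*}
Since $\mathsf{F}n\ge 1$, this is nondecreasing in $a$, hence at most $d\bigl((\mathsf{F}n-1)\lceil T/\sfP\rceil+T\bigr)$, and using the elementary bound $T\le\sfP\lceil T/\sfP\rceil$ we conclude it is at most $d\bigl((\mathsf{F}n-1)\lceil T/\sfP\rceil+\sfP\lceil T/\sfP\rceil\bigr)=d(\mathsf{F}n+\sfP-1)\lceil T/\sfP\rceil$.

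The argument is purely combinatorial, so there is no genuine obstacle; the points that need care are the bookkeeping (counting the initial full-gradient computation so that the number of full-gradient rounds is exactly $k(T)$, and the estimate $T\le\sfP\lceil T/\sfP\rceil$), the precise distributed model (so that a reference-update round costs $d\mathsf{F}n$ bits from all $n$ workers rather than $d\mathsf{F}$), and the monotonicity check $\mathsf{F}n\ge 1$ that justifies replacing $a$ by its worst-case value $\lceil T/\sfP\rceil$ without flipping the inequality.
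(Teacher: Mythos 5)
Your counting argument is correct and matches the paper's (implicit) reasoning: the paper states this lemma without a displayed proof, relying on exactly the accounting you give — $k(T)\leq\lceil T/\sfP\rceil$ full-gradient rounds at $d\mathsf{F}n$ bits each (all $n$ workers sending $d$ floats, consistent with the paper's $dT(32+(1-1/n))$ example for $\sfP=n$) plus $d$ bits on the remaining sign rounds, combined via $T\leq\sfP\lceil T/\sfP\rceil$. Your version is, if anything, slightly more careful about the monotonicity in the number of update rounds and the initial-gradient bookkeeping; no gap.
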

Thus, for practical purposes it seems that $\sfP = \mathsf{F}n$ is a reasonable choice, where $\mathsf{F}$ is the size of float.

\subsection{Smooth convex case}
In the previous section, we have established that SignSVRG is recovering convergence rates of the deterministic sign gradient descent.
Note that there are no known convergence rates for~\ref{eq:def_signGD} on convex and Lipschitz function (without strong convexity). A notable exception is the work of~\cite{moulay2019properties}, where authors require additional, rather unnatural, assumption to establish convergence of~\ref{eq:def_signGD} (see their Theorem 1 and Eqs.~(4), (5)).
Yet, quite surprisingly, the first variant (\Varone) of our method also enjoys improved rates when $f$ is supposed to be convex. We emphasize that this is not the case for \Vartwo, which follows~\ref{eq:def_signGD} too closely. We work here under the following assumption.
\begin{assumption}
    \label{ass:convex}
    The functions $f$ is convex and admits a global minimizer.
\end{assumption}
Denote $f_* = \min f(x)$ and $\xs$ such that $f(x_*) = f_*$. The main result of this section is the convergence rate of the first variant (\Varone) of SignSVRG on convex smooth functions. As before, we first state the result that is valid for any fixed step-size $\gamma$.
\begin{theorem}
    \label{thm:main_cvx}
Let Assumptions~\ref{ass:smooth} and \ref{ass:convex} hold. For all $D, \gamma > 0$, \Varone of Algorithm~\ref{algo:SignSVRG} satisfies
    \begin{align*}
     \frac{1}{T} \sum_{t=1}^{T} \Exp \left[\frac{ f(x_t)- f_{*}}{2L_q D + \sqrt{ 2 L_q (f(x_t)- f_{*})}} \right] \leq \frac{\norm{x_1 - \xs}^2}{2T \gamma} + \frac{\gamma d}{2}\enspace .
\end{align*}
\end{theorem}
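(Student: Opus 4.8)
The plan is to run the argument of Section~\ref{sec:trick} in the variance-reduced setting, tracking the squared distance to the minimizer $\|x_t - \xs\|^2$ instead of the function value used for the non-convex Theorem~\ref{thm:main_non_cvx}. Write the update of \Varone as $x_{t+1} = x_t - \gamma\sign(v_t + G_t^1 U_t)$, where $v_t = \nabla f_{i_t}(x_t) - \nabla f_{i_t}(\tilde{x}_{k(t)}) + g_{k(t)}$ and $G_t^1 = L_q\|x_t - \tilde{x}_{k(t)}\|_q + \|g_{k(t)}\|_p$; the decisive feature of \Varone (as opposed to \Vartwo) is that the corruption amplitude $G_t^1$ is \emph{the same for every coordinate}. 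Expanding the square and using $\|\sign(\cdot)\|^2 \leq d$,
\begin{align*}
    \|x_{t+1} - \xs\|^2 \leq \|x_t - \xs\|^2 - 2\gamma\scalar{x_t - \xs}{\sign(v_t + G_t^1 U_t)} + \gamma^2 d\enspace.
\end{align*}

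Next I would take the conditional expectation given $H_t = \sigma(x_1, \ldots, x_t, U_1, \ldots, U_{t-1})$, with respect to which $G_t^1$, $\tilde{x}_{k(t)}$ and $g_{k(t)}$ are all measurable. Since $|v_t^j| \leq \|v_t\|_p \leq G_t^1$ almost surely (triangle inequality together with Assumption~\ref{ass:smooth}) and $\Exp[v_t \mid H_t] = \nabla f(x_t)$, the same computation as in~\eqref{eq:key}, applied coordinate-wise and then averaged over $i_t$, gives $\Exp[\sign(v_t^j + G_t^1 U_t^j) \mid H_t] = \nabla_j f(x_t)/G_t^1$. Because the amplitude does not depend on $j$, the cross term collapses into a genuine inner product, and convexity of $f$ (so that $\scalar{x_t - \xs}{\nabla f(x_t)} \geq f(x_t) - f_* \geq 0$) yields, after rearranging,
\begin{align*}
    \frac{f(x_t) - f_*}{G_t^1} \leq \frac{\|x_t - \xs\|^2 - \Exp[\|x_{t+1} - \xs\|^2 \mid H_t]}{2\gamma} + \frac{\gamma d}{2}\enspace.
\end{align*}

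It then remains to trade the random denominator for the one in the statement. By the design of the algorithm (line~\ref{line:check}) we have $\|x_t - \tilde{x}_{k(t)}\|_q \leq D$, so the Lipschitzness of $\nabla f$ gives $\|g_{k(t)}\|_p = \|\nabla f(\tilde{x}_{k(t)})\|_p \leq L_q D + \|\nabla f(x_t)\|_p$, whence $G_t^1 \leq 2L_q D + \|\nabla f(x_t)\|_p$; Lemma~\ref{lm:gap_lbound} then gives $\|\nabla f(x_t)\|_p \leq \sqrt{2L_q(f(x_t) - f_*)}$, so $G_t^1 \leq 2L_q D + \sqrt{2L_q(f(x_t) - f_*)}$. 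Since $f(x_t) - f_* \geq 0$ and $G_t^1 \leq 2L_q D + \sqrt{2L_q(f(x_t) - f_*)}$, we have $\frac{f(x_t)-f_*}{2L_q D + \sqrt{2L_q(f(x_t)-f_*)}} \leq \frac{f(x_t)-f_*}{G_t^1}$, so the previous display stays valid with this larger denominator. Taking total expectation, summing over $t = 1, \ldots, T$ so that the distance terms telescope, discarding the nonnegative $\Exp[\|x_{T+1} - \xs\|^2]$, and dividing by $T$ produces exactly the claimed inequality.

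The computation is short; the only delicate point is that $G_t^1$ is random and sits in a denominator. This is harmless precisely because $G_t^1$ is $H_t$-measurable (so it passes through the conditional expectation untouched) and because the radius control built into the algorithm together with Lemma~\ref{lm:gap_lbound} bound it above by a quantity that depends only on $f(x_t) - f_*$. This is also the exact point at which \Vartwo fails: with coordinate-dependent amplitudes $G_t^j$ the cross term cannot be rewritten as $\scalar{x_t - \xs}{\nabla f(x_t)}$ and convexity cannot be invoked, which is why the $\cO(1/T)$-type rate is available only for \Varone.
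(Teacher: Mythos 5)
Your proof is correct and follows essentially the same route as the paper's: expand $\|x_{t+1}-\xs\|^2$, use the $H_t$-measurability of $G_t^1$ together with the expected-sign identity to turn the cross term into $-2\gamma\scalar{x_t-\xs}{\nabla f(x_t)}/G_t^1$, invoke convexity, bound $G_t^1 \leq 2L_qD+\|\nabla f(x_t)\|_p$ via the radius control and Lipschitzness, and conclude with Lemma~\ref{lm:gap_lbound} and telescoping. The only (immaterial) difference is that you substitute the bound from Lemma~\ref{lm:gap_lbound} before taking total expectation and summing, whereas the paper does it as the final step.
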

As in the previous section, the convergence rate is obtained by carefully setting the step-size $\gamma > 0$.
\begin{corollary}\label{cor:cvx}
Let Assumption~\ref{ass:smooth} and \ref{ass:convex} be satisfied. \Varone of Algorithm~\ref{algo:SignSVRG} with parameters $\gamma = \alpha /(\sqrt{d T})$ and $D = \sfP/\sqrt{T}$, for some $\sfP>0, \alpha > 0$ satisfies
\begin{align*}
\Exp[ \sqrt{f(\bar x_T) - f_*}] \leq \sqrt{\frac{2L_q}{T}}\max\left( \sfP\, ,  \sqrt{d}  \left( \frac{\norm{x_1 - \xs}^2}{\alpha} + \alpha\right) \right)\enspace.
 \end{align*}
where $\bar{x}_T$ is a random point along the trajectory of the algorithm.
\end{corollary}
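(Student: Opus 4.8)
The plan is to derive Corollary~\ref{cor:cvx} from Theorem~\ref{thm:main_cvx} in three short moves: turn the time average on the left-hand side into a single expectation over a uniformly sampled iterate, substitute the prescribed $\gamma$ and $D$ to collapse the right-hand side, then invoke Jensen's inequality and solve the resulting scalar inequality by a case split.

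Concretely, I would let $\bar x_T$ be sampled uniformly at random from $\{x_1,\dots,x_T\}$, so that Theorem~\ref{thm:main_cvx} reads $\Exp\!\left[ \tfrac{f(\bar x_T)-f_*}{2L_qD+\sqrt{2L_q(f(\bar x_T)-f_*)}} \right] \le \tfrac{\norm{x_1-\xs}^2}{2T\gamma}+\tfrac{\gamma d}{2}$, where $\Exp$ now also averages over $\bar x_T$. Setting $Y \triangleq \sqrt{f(\bar x_T)-f_*}\ge 0$, $a\triangleq 2L_qD$, $c\triangleq \sqrt{2L_q}$ and $R\triangleq \tfrac{\norm{x_1-\xs}^2}{2T\gamma}+\tfrac{\gamma d}{2}$, the bound becomes $\Exp[Y^2/(a+cY)]\le R$. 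Plugging $\gamma=\alpha/\sqrt{dT}$ and $D=\sfP/\sqrt T$ gives $R=\tfrac{\sqrt d}{2\sqrt T}\bigl(\tfrac{\norm{x_1-\xs}^2}{\alpha}+\alpha\bigr)$ and $a=2L_q\sfP/\sqrt T$.

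Next, the map $y\mapsto y^2/(a+cy)$ is convex on $[0,\infty)$: after the linear substitution $z=cy$ it equals $c^{-2}\, z^2/(a+z)$, and $z\mapsto z^2/(a+z)$ is convex for $z\ge 0$ (the same elementary fact already used in the proof of Theorem~\ref{thm:main_non_cvx}). Hence Jensen's inequality yields $Z^2/(a+cZ)\le R$ for $Z\triangleq \Exp[\sqrt{f(\bar x_T)-f_*}]$. It then remains to solve this scalar inequality for $Z$ by a case split. If $a\ge cZ$, then already $Z\le a/c=\sqrt{2L_q/T}\,\sfP$. Otherwise $cZ>a$, so $a+cZ\le 2cZ$ and therefore $\tfrac{Z}{2c}\le\tfrac{Z^2}{a+cZ}\le R$, i.e. $Z\le 2cR=\sqrt{2L_q/T}\,\sqrt d\bigl(\tfrac{\norm{x_1-\xs}^2}{\alpha}+\alpha\bigr)$. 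In both cases $Z\le \sqrt{2L_q/T}\,\max\!\bigl(\sfP,\ \sqrt d(\tfrac{\norm{x_1-\xs}^2}{\alpha}+\alpha)\bigr)$, which is exactly the claimed bound.

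No step here is genuinely hard; the only places that need a little care are checking the convexity of $y\mapsto y^2/(a+cy)$ so that Jensen is applicable, and organizing the two-case analysis so that the two partial bounds assemble precisely into the stated maximum. Everything else is bookkeeping with the chosen step-size and radius.
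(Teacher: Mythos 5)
Your proposal is correct and follows essentially the same route as the paper's proof: substitute the prescribed $\gamma$ and $D$ into Theorem~\ref{thm:main_cvx}, apply Jensen's inequality using the convexity of $x \mapsto x^2/(x+b)$ on $[0,\infty)$, and resolve the resulting scalar inequality by the standard two-case split, which yields exactly the stated maximum. Your write-up merely makes the case analysis and the normalization $z = cy$ slightly more explicit than the paper does.
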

The above result perfectly highlights our decision to insist on $q = 1$. Indeed, due to Lemma~\ref{lem:Lq_order}, the smallest $L_q$ corresponds to $q = 1$ and there are non-trivial situations when $L_1 \approx 1 / d$, making the above convergence rate actually dimension independent. To the best of our knowledge, the above result is the first convergence rate for convex smooth functions of stochastic sign based methods. We remark, however, that the bound is obtained on the expected root of the optimization error which is always worse than the root of expectation by Jensen's inequality. 






\subsection{On the absence of linear rates for strongly convex functions}

A reader acquainted with VR techniques might be tempted to ask if our method enjoys improved rates in the case where every $f_i$ is a strongly convex function. Indeed, it is known that in this case deterministic gradient descent converges linearly and variance reduction methods~\cite{defazio2014saga, reddi2016stochastic, johnson2013accelerating,pmlr-v108-gorbunov20a} are able to recover such rates. In this section, we argue that this is not achievable for SignSVRG. To this end, we give an example of a strongly convex function, where~\ref{eq:def_signGD} does not converge linearly for any fixed step-size $\gamma > 0$.

\begin{example}[\ref{eq:def_signGD} does not converge linearly with constant step-size]
    Consider the iteration of the form
    \begin{align*}
        x_{t+1} = x_t - \gamma \sign(\nabla f(x_t))\enspace.
    \end{align*}
    Let $f(x) = \tfrac{1}{2}x^2$ and note that $\sign(\nabla f(x)) = \sign(x)$. Already at this level, we can see that expecting linear rates is unreasonable---\ref{eq:def_signGD} on $f$ is simply sub-gradient descent on $|x|$ which is non-smooth and non-strongly convex. Yet, we can make this intuition more formal. Noticing that the global minimum of $f$ is $f_* = 0$,
    setting $\delta_{t} = \sqrt{f(x_{t+1}) - f_*}$ (note how square root pops-up naturally here) and using Taylor's formula, we get
    \begin{align*}
        \delta_{t+1}^2 = \delta_t^2 - \sqrt{2}\gamma \delta_{t} + \frac{\gamma^2}{2} = \left(\delta_t - \frac{\gamma}{\sqrt{2}}\right)^2\enspace.
    \end{align*}
    We can immediately rule out the approach of taking square roots from both sides since the resulting recursion is not converging.
    Proceeding as with non-smooth convex optimization and re-arranging the above, followed by summation for $t = 1, \ldots, T$, we get
    \begin{align*}
        \sqrt{2} \sum_{t = 1}^T \delta_{t} = \frac{\delta_1^2 - \delta_{T+1}^2}{\gamma} + \frac{\gamma T}{2}\enspace.
    \end{align*}
    For any fixed choice of $\gamma$, there is no linear convergence for the above recursion.
\end{example}
While this is not a proof of the absence of linear rates for SignSVRG, this argument is still plausible since the whole goal of variance reduction techniques is to recover rates that are similar to the non-stochastic setup. Thus, since~\ref{eq:def_signGD} does not converge linearly in general it is hard to expect linear convergence from the SignSVRG.

\subsection{Connection with NoisySIGN of \cite{chen2020distributed}.}
It seems that the first instance of incorporating additive noise under the $\sign$ operator is due to~\cite{chen2020distributed}. Unlike our work, they provide theory for arbitrary additive zero-mean symmetric noise, requiring the variance of the noise to explode with the number of iterations, effectively going to infinity when $T \rightarrow \infty$, to ensure convergence. Interestingly, they have already observed that, in practice, the constant noise largely suffices and there is no need to push its variance to infinity. Furthermore, it seems that the observation of Section~\ref{sec:trick} was simply overlooked by the authors, possibly, due to the generality of the noise. Indeed, their Theorem~5 explicitly assumed that the gradient is bounded by some $Q$ along the trajectory of their algorithm. The main novelty of our work is the use of this trick in the context of variance reduction, leading to faster rates of convergence than those of \cite{chen2020distributed} as well as handling the convex case, all under milder assumptions. Furthermore, we single-out an explicit noise whose magnitude goes to zero as $T \rightarrow \infty$.

\section{Conclusion}
We have presented a simple practical way to incorporate variance reduction technique into the~\ref{eq:signSGD_intro} procedure, fixing its non-convergence issues on finite sums. The trick is based on corruption of the $\sign$ operator by uniform noise.
We have first showed that that this simple approach leads to convergence rates for sums of Lipschitz convex functions.
Then, we proposed two variants of SignSVRG---the first one (\Varone) enjoys convergence rates on non-convex and convex smooth objectives, while the second one (\Vartwo) exhibits rates that are very similar to the full~\ref{eq:def_signGD} algorithm. Our proofs are significantly simpler than typical strategies for variance reduction methods.\\
This work is the first step towards incorporation of variance reduction techniques into~\ref{eq:signSGD_intro} and we expect many more developments in the future. While our work is purely theoretical, it would be beneficial to conduct large-scale experimental studies that we plan to do in future contributions.
The main limitation of the work is the non-adaptive nature of the introduced trick---we need to known either the Lipschitz constant of the gradient or a uniform bound on the gradient. Finally, we are working on extensions of our approach to the setup of Federated Learning, where our idea might be extended in a seemingly straightforward manner.

\bibliography{biblio.bib}
\bibliographystyle{alpha}

\newpage
\appendix

\section{Convergence of signGD}\label{app:signGD}

Let $f: \bbR^d \rightarrow \bbR$ be differentiable and consider the iterations produced by \ref{eq:def_signGD}:
\begin{equation}\label{algo:sign_GD}
    x_{t+1} = x_t - \gamma \sign(\nabla f(x_t))\, .
\end{equation}
We assume throughout this section that there are $p,q \geq 1$, H\"older conjugates, such that for all $x, y \in \bbR^d$, $\norm{\nabla f(x) - \nabla f(y)}_{p} \leq L_q \norm{x-y}_q$.
\begin{proposition}\label{pr:sign_gd}
Assume that $f$ is lower bounded by $f_* \in \bbR$.
For $T \geq 1$ and $x_1 \in \bbR^d$, iterates produced by~\eqref{algo:sign_GD} satisfy
\begin{equation*}
    \Exp[\norm{\nabla f(\bar{x}_T)}_1] \leq \frac{f(x_1) - f_*}{T \gamma} + \frac{L_q\gamma d^{2/q}}{2} \enspace,
\end{equation*}
where $\bar{x}_T$ distributed uniformly on $\{x_1, \ldots, x_T\}$.
\end{proposition}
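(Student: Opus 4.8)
The plan is to mimic the proof of Theorem~\ref{thm:main_non_cvx} but in the simpler deterministic setting, where there is no stochastic gradient and no variance reduction correction, so the only randomness is in the final uniform selection of $\bar{x}_T$. First I would apply the descent lemma (Lemma~\ref{lm:descent}) to consecutive iterates of~\eqref{algo:sign_GD}: since $x_{t+1} - x_t = -\gamma \sign(\nabla f(x_t))$ and $\|\sign(\nabla f(x_t))\|_q \leq d^{1/q}$ (each coordinate is $0$ or $\pm 1$), we get
\begin{equation*}
    f(x_{t+1}) \leq f(x_t) - \gamma \scalar{\nabla f(x_t)}{\sign(\nabla f(x_t))} + \frac{L_q}{2}\gamma^2 d^{2/q}\enspace.
\end{equation*}
The key algebraic identity is $\scalar{\nabla f(x_t)}{\sign(\nabla f(x_t))} = \sum_{j=1}^d |\nabla_j f(x_t)| = \|\nabla f(x_t)\|_1$, which is precisely where the $\ell_1$-norm of the gradient enters the bound.

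Next I would rearrange to isolate $\|\nabla f(x_t)\|_1 \leq \big(f(x_t) - f(x_{t+1})\big)/\gamma + \tfrac{L_q}{2}\gamma d^{2/q}$ and sum this telescoping inequality over $t = 1, \ldots, T$. The sum on the right collapses to $f(x_1) - f(x_{T+1}) \leq f(x_1) - f_*$, using that $f$ is lower bounded by $f_*$. This yields
\begin{equation*}
    \sum_{t=1}^T \|\nabla f(x_t)\|_1 \leq \frac{f(x_1) - f_*}{\gamma} + \frac{L_q \gamma d^{2/q} T}{2}\enspace.
\end{equation*}
Dividing by $T$ and observing that $\tfrac{1}{T}\sum_{t=1}^T \|\nabla f(x_t)\|_1 = \Exp[\|\nabla f(\bar{x}_T)\|_1]$ by the uniform choice of $\bar{x}_T$ gives exactly the claimed bound.

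There is no real obstacle here — the argument is entirely elementary and strictly simpler than the proof of Theorem~\ref{thm:main_non_cvx}, since we do not need to control the noise magnitude $G_t$, invoke the reference-point mechanism, or take conditional expectations over $i_t$ and $U_t$. The only minor point requiring care is the bound $\|\sign(u)\|_q^2 \leq d^{2/q}$ used inside the $q$-norm term of the descent lemma, and the fact that $\scalar{u}{\sign(u)} = \|u\|_1$ holds coordinate-wise with the convention $\sign(0) \cdot 0 = 0$; both are immediate. One could also phrase the conclusion for any realization as a statement about $\min_{t \leq T}\|\nabla f(x_t)\|_1$, but the in-expectation form over the uniformly sampled $\bar{x}_T$ is cleaner and matches the corollaries in the main text.
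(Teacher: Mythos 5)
Your proof is correct and follows essentially the same route as the paper: apply Lemma~\ref{lm:descent} to one step of~\eqref{algo:sign_GD}, use $\scalar{\nabla f(x_t)}{\sign(\nabla f(x_t))} = \norm{\nabla f(x_t)}_1$, telescope, and convert the average into an expectation over the uniformly sampled $\bar{x}_T$. No gaps.
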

\begin{proof}
By Lemma~\ref{lm:descent}
    \begin{equation*}
    \begin{split}
         f(x_{t+1})&\leq f(x_t) - \gamma \scalar{\nabla f(x_t)}{\sign(\nabla f(x_t)}  + \frac{\gamma^2 L_q d^{2/q}}{2}\\
         &\leq f(x_t) - \gamma \norm{\nabla f(x_t)}_1 + \frac{\gamma^2 L_q d^{2/q}}{2} \enspace .
    \end{split}
    \end{equation*}
    Thus, 
    \begin{equation}\label{eq:sum_sign}
        \gamma \sum_{t=1}^{T} \norm{\nabla f(x_t)}_1 \leq f(x_1) - f(x_{T+1}) + \frac{\gamma^2 L_q d^{2/q}}{2}\, ,
    \end{equation}
    and 
    \begin{equation*}
        \Exp[\norm{\nabla f(\bar{x}_T)}_1] \leq \frac{f(x_1) - f_*}{T \gamma} + \frac{\gamma L_q d^{2/q}}{2}\enspace.
    \end{equation*}
\end{proof}

\begin{corollary}
 In the setting of Proposition~\ref{pr:sign_gd} and choosing $\gamma =\frac{1}{d^{1/q}} \sqrt{\frac{2}{TL_q}}$, we obtain
\begin{equation*}
    \norm{\nabla f(\bar{x}_T)}_1 \leq d^{1/q}\sqrt{\frac{L_q}{2T}}(f(x_1) - f_* + 1) \enspace .
\end{equation*}
In particular, to produce a point such that $\norm{\nabla f(x_t)}_1 \leq \varepsilon^{-1}$, signGD requires $\cO(\varepsilon^{-2})$ iterations. 
\end{corollary}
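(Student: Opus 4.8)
The plan is to do nothing more than substitute the prescribed step-size into the guarantee of Proposition~\ref{pr:sign_gd} and collect terms. That proposition asserts, for every $\gamma>0$,
\begin{equation*}
\Exp[\norm{\nabla f(\bar{x}_T)}_1] \leq \frac{f(x_1) - f_*}{T \gamma} + \frac{L_q\gamma d^{2/q}}{2}\enspace,
\end{equation*}
whose right-hand side has the familiar shape $A/\gamma + B\gamma$ with $A=(f(x_1)-f_*)/T$ and $B=L_q d^{2/q}/2$. The choice $\gamma=\tfrac{1}{d^{1/q}}\sqrt{2/(TL_q)}$ is precisely the balancing point $\gamma=\sqrt{1/(TB)}\cdot\sqrt{2}$ that makes the two contributions of comparable order (it is, up to the $f(x_1)-f_*$ factor, the minimizer $\sqrt{A/B}$), so no optimization argument is even needed — the value is handed to us.

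First I would evaluate the ``bias'' term: $\frac{f(x_1)-f_*}{T\gamma} = (f(x_1)-f_*)\,d^{1/q}\sqrt{L_q/(2T)}$, using $\tfrac{1}{T\gamma}=\tfrac{d^{1/q}}{T}\sqrt{TL_q/2}=d^{1/q}\sqrt{L_q/(2T)}$. Next the ``step-size'' term: $\frac{L_q\gamma d^{2/q}}{2}=\frac{L_q d^{1/q}}{2}\sqrt{2/(TL_q)}=d^{1/q}\sqrt{L_q/(2T)}$, since $\tfrac{L_q}{2}\sqrt{2/(TL_q)}=\sqrt{L_q/(2T)}$. Adding the two and factoring out the common $d^{1/q}\sqrt{L_q/(2T)}$ gives exactly $d^{1/q}\sqrt{L_q/(2T)}\,(f(x_1)-f_*+1)$, which is the claimed bound.

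For the ``in particular'' clause I would simply invert the rate: requiring the right-hand side to be at most $\varepsilon$ amounts to $T \geq \tfrac{d^{2/q}L_q(f(x_1)-f_*+1)^2}{2\varepsilon^2}$, hence $T=\cO(\varepsilon^{-2})$ iterations suffice (the exponent $\varepsilon^{-1}$ in the target accuracy being read as the accuracy level $\varepsilon$). I do not anticipate any real obstacle: everything reduces to the two-line algebra of substituting the step-size, so the statement is a direct corollary of the preceding proposition, with the only subtlety being to keep the powers of $d$ and the factor $\sqrt{L_q/(2T)}$ straight.
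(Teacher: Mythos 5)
Your proposal is correct and is exactly the argument the paper intends (the paper gives no explicit proof, since the corollary is a direct substitution of $\gamma=\tfrac{1}{d^{1/q}}\sqrt{2/(TL_q)}$ into Proposition~\ref{pr:sign_gd}); your algebra for both terms checks out, each equal to $d^{1/q}\sqrt{L_q/(2T)}$ times its respective factor. The only cosmetic remarks are that the left-hand side should carry the expectation over the uniformly random index $\bar{x}_T$ (as in the proposition), and you rightly read the paper's ``$\leq\varepsilon^{-1}$'' as the intended accuracy $\varepsilon$.
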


Under the first impression, the convergence rate of $\cO(\varepsilon^{-2})$ matches the one of the deterministic gradient descent~\cite{nesterov2018lectures}. However, we see that such rate is obtained for different reasons. Indeed, in the case of gradient descent (for small enough $\gamma$), Eq.~\eqref{eq:sum_sign} is replaced by:
\begin{equation}\label{eq:sum_GD}
    \gamma \sum_{t=1}^T \norm{\nabla f(x_t)}^2 \leq f(x_1) - f(x_{T+1}) \enspace .
\end{equation}
This equation, is in some sense better, since it shows that $f$ decreases along the iterates and implies that $\norm{\nabla f(x_t)} \rightarrow 0$ (as soon as $\gamma$ is small enough). Certainly, signGD will not converge (with a constant step size) to a critical point (indeed, in the simple example where $f(x) = x^2$, signGD will be oscillating around $0$). However, the fact that the squared norm in Eq.~\eqref{eq:sum_GD} is replaced by just a norm (without a square) in Eq.~\eqref{eq:sum_sign}, allows us to recover the same rate of convergence by appropriately choosing the step size. This is reminiscent of what happens in the case of SGD, where the equivalent of Eq.~\eqref{eq:sum_sign} and Eq.~\eqref{eq:sum_GD} is 
\begin{equation}\label{eq:sum_SGD}
    \gamma \sum_{t=1}^T \Exp [\norm{\nabla f(x_t)}^2] \leq f(x_1) - f_* + \frac{L_q}{2} \gamma^2 \sigma^2 \enspace ,
\end{equation}
with $\sigma^2$ being the variance of the stochastic gradient. For similar reasons, SGD is able to produce a point such that $\Exp[\nabla f(x)] = \cO(\varepsilon)$ in $\cO(\varepsilon^{-4})$ iterations. Since, $\norm{\cdot}^2$ in Eq.~\eqref{eq:sum_SGD} is replaced by $\norm{\cdot}_1$ in Eq.~\eqref{eq:sum_sign}, we are able to increase this rate and recover the rates of GD.

\newpage
\section{Remaining proofs}\label{app:proofs}

\begin{proof}[Proof of Lemma~\ref{lm:descent}]
    Recall that for all $u, v\in \bbR^d$ and $p, q \geq 1$, H\"older conjugates, $\scalar{v}{u} \leq \norm{v}_p \norm{u}_q$.
    Since for all $y, x \in \bbR^d$, $\norm{\nabla f(y) - \nabla f(x)}_{p} \leq L_q \norm{y-x}_q$, it holds that
    \begin{equation*}
    \begin{split}
        f(y) - f(x) &= \int_{0}^1 \scalar{f(x + t(y-x))}{y-x} \d t  \\
                &= \scalar{\nabla f(x)}{y-x} + \int_{0}^{1}\scalar{\nabla f(x + t(y-x)) - \nabla f(x)}{y-x}\d t \\
        &=\scalar{\nabla f(x)}{y-x} + \norm{y-x}_q\int_{0}^1 t \norm{\nabla f(x + t(y-x)) - \nabla f(x)}_p \d t \\
        &\leq \scalar{\nabla f(x)}{y-x} + L_q\norm{y-x}_q^2 \int_{0}^1 t \d t \\
        &= \scalar{\nabla f(x)}{y-x} + \frac{L_q}{2}\norm{y-x}_q^2 \enspace .
    \end{split}
    \end{equation*}
\end{proof}

\begin{proof}[Proof of Lemma~\ref{lm:gap_lbound}]
    By Lemma~\ref{lm:descent} we obtain
    \begin{equation*}
    \begin{split}
               f_* - f(x) &\leq \min_{y \in \bbR^d} \left( \scalar{\nabla f(x)}{y-x} + \frac{L_q}{2}\norm{y-x}_q^2 \right)\\
               &= \min_{t \in \bbR}  \min_{\norm{v}_q =1} \left(t\scalar{\nabla f(x)}{v} + \frac{L_q}{2}t^2\right) \\
               &= \min_{t \in \bbR} \left(-t \norm{\nabla f(x)}_p + \frac{L_q}{2}t^2\right)\\
               &= - \frac{\norm{\nabla f(x)}_p^2}{2L_q} \, ,
    \end{split}
    \end{equation*}
    which completes the proof.
\end{proof}

\begin{proof}[Proof of Corollary~\ref{cor:non_cvx}]
For \Varone, starting from Theorem~\ref{thm:main_non_cvx}, ours choice of $\gamma$ yields
\begin{align*}
    \frac{1}{T}\Exp\sum_{t = 1}^T\frac{\norm{\nabla f(x_t)}^2}{2L_{q} D  + \norm{\nabla f(x_t)}_{p}} \leq \frac{d^{1/q}\sqrt{L_q}(f(x_1)  - f_* + 1)}{\sqrt{2T}} \enspace.
\end{align*}
 Thus, setting $\bar{x}_T$ a point randomly sampled along the trajectory, it holds that:
 \begin{align*}
     \Exp\left[\frac{\norm{\nabla f(\bar{x}_T)}^2}{2L_{q} D  + \norm{\nabla f(\bar{x}_T)}_{p}}\right] \leq \frac{d^{1/q}\sqrt{L_q}(f(x_1)  - f_* + 1)}{\sqrt{2T}}\enspace.
 \end{align*}
 Note that the function $(x, y) \mapsto x^2 / (y + b)$ is convex for all $b > 0$ and $x > 0, y > 0$. Therefore, we obtain by Jensen's inequality that
 \begin{align*}
     \frac{\Exp[\norm{\nabla f(\bar{x}_T)}]^2}{2L_{q} D + \Exp[\norm{\nabla f(\bar{x}_T)}_{p}]} \leq \frac{d^{1/q}\sqrt{L_q}(f(x_1)  - f(x_*) + 1)}{\sqrt{2T}}\enspace.
 \end{align*}
This implies that either
 \begin{equation*}
     \Exp[\norm{\nabla f(\bar{x}_T)}_{p}] \leq 2L_{q} D \quad\text{or}\quad \frac{\Exp[\norm{\nabla f(\bar{x}_T)}]^2}{\Exp[\norm{\nabla f(\bar{x}_T)}_{p}]} \leq \frac{2d^{1/q}\sqrt{L_q}(f(x_1)  - f(x_*) + 1)}{\sqrt{2T}}\,.
 \end{equation*}

 For \Vartwo, using Theorem~\ref{thm:main_non_cvx}, we obtain from the same arguments:
 \begin{equation*}
     \frac{\frac{1}{d}\Exp[\norm{\nabla f(\bar x_T)}_1]^2}{2 L_q D + \frac{1}{d}\Exp[\norm{\nabla f(\bar x_T)}_1]} \leq \frac{d^{1/q}\sqrt{L_q}(f(x_1)  - f_* + 1)}{\sqrt{2T}}\enspace,
 \end{equation*}
 and we conclude in the same manner.
\end{proof}

\begin{proof}[Proof of Theorem~\ref{thm:main_cvx}]
Notice that for all $t$ and for all $i \in \{1, \dots, n\}$, any coordinate of $|\nabla f_i(x_t) - \nabla f_i(\tilde x_{k(t)}) + g_{k(t)}|$ is almost surely bounded (conditionally on $x_t$) by $G_t^1$. Thus, denoting $H_t$ the sigma-algebra generated by $x_1, \ldots, x_t$,
\begin{equation*}
    \Exp[x_{t+1} - x_t \mid H_t] = - \gamma \frac{\nabla f(x_t)}{G_t}\, .
\end{equation*}
    In particular, using the convexity of $f$ and the fact $G_t \leq 2L_q D + \|\nabla f(x_t)\|_{p}$, we obtain
    \begin{equation*}
    \begin{split}
         \Exp[\norm{x_{t+1} - x_*}^2 \mid H_t ] &= \norm{x_t - \xs}^2+ 2 \Exp[\scalar{x_{t+1} - x_t}{x_t - \xs} \mid H_t]+ \gamma^2 d \\
         &= \norm{x_t - \xs}^2-2\gamma \frac{\scalar{\nabla f(x_t)}{x_t - \xs}}{G_t} + \gamma^2 d \\
         &\leq \norm{x_t - \xs}^2 +2 \gamma \frac{f_{*} - f(x_t)}{2L_q D + \|\nabla f(x_t)\|_{p}} + \gamma^2 d \, .
    \end{split}
    \end{equation*}
    Rearranging and summing this inequality, we obtain
\begin{equation*}
    \frac{1}{T} \sum_{t=1}^{T} \Exp \left[\frac{ f(x_t)- f_{*}}{2L_q D + \|\nabla f(x_t)\|_{p}} \right] \leq \frac{\norm{x_1 - \xs}^2}{2T \gamma} + \frac{\gamma d}{2} \, .
\end{equation*}
The claimed inequality follows from Lemma~\ref{lm:gap_lbound}.
\end{proof}

\begin{proof}[Proof of Corollary~\ref{cor:cvx}]
    Substituting our choice of $\gamma$ into Theorem~\ref{thm:main_cvx}, we obtain
    \begin{equation*}
       \Exp\left[ \frac{f(\bar x_T) - f_*}{2L_q D + \sqrt{2 L_q (f(\bar x_T) -f_*)}}\right]
       \leq \frac{1}{2}\sqrt{\frac{d}{T}}\left( \frac{\norm{x_1 - \xs}^2}{\alpha} + \alpha \right) \enspace.
    \end{equation*}
    Furthermore, thanks to the convexity of $x \mapsto x^2 / (x + b)$ for all $b > 0$ and $x > 0$, Jensen's inequality yields
    \begin{equation*}
        \frac{ \Exp[\sqrt{f(\bar x_T) - f_*}]^2}{2L_q D + \Exp[\sqrt{2 L_q (f(\bar x_T) - f_*)}]}
        \leq \frac{1}{2}\sqrt{\frac{d}{T}}\left( \frac{\norm{x_1 - \xs}^2}{\alpha} + \alpha \right) \enspace.
    \end{equation*}
    Finally, using the expression for $D$, we have either
    \begin{equation*}
       \Exp[\sqrt{f(\bar x_T) - f_*}] \leq
       \sfP\sqrt{\frac{2L_q}{T}}\qquad\text{or}\qquad
\Exp[\sqrt{f(\bar{x}_T) - f_*}] \leq \sqrt{\frac{2 L_qd}{T}} \left( \frac{\norm{x_1 - \xs}^2}{\alpha} + \alpha\right)\enspace .
    \end{equation*}
    The proof is concluded.
\end{proof}

\end{document}